\documentclass[12pt,a4paper,reqno]{amsart}
\usepackage{amsthm}
\usepackage{amsmath}
\usepackage{color}
\usepackage{graphicx}
\usepackage{amsfonts}
\usepackage{xfrac}
\usepackage{mathrsfs}
\numberwithin{equation}{section}
\begin{document}
\title{Logarithm laws for the BCZ map}
\author{Y.~Li}
\begin{abstract}
We present the logarithm laws for the partial sum of the itinerary function over non-periodic BCZ orbits, utilizing the even and odd Diophantine exponent defined by Athreya-Margulis \cite{At1}. We also give a detailed description of the BCZ map and its excursions.
\end{abstract}
\maketitle

\newtheorem{ddd}{Definition}[section]
\newtheorem{ttt}[ddd]{Theorem}
\newtheorem{rrr}[ddd]{Remark}
\newtheorem{ppp}[ddd]{Proposition}
\newtheorem{ccc}[ddd]{Corollary}
\newtheorem{llll}[ddd]{Lemma}
\newtheorem{con}[ddd]{Conjecture}
\renewcommand{\bold}[1]{\smallskip \noindent {\bf \boldmath #1 }\nopagebreak[4]}

\section{Introduction}\label{sec:intro}
\raggedbottom
\allowdisplaybreaks
Logarithm laws for diagonalizable flows were first studied by Sullivan \cite{Su}, who showed the logarithm laws for the cusp excursion of geodesic flow. Later, Masur \cite{Ma} extended the result for geodesic flow in the moduli space. Then, Kleinbock-Margulis \cite{Kl} studied the actions of one-parameter diagonalizable subgroups on non-compact finite-volume homogeneous spaces in a more general context. Subsequently, Athreya-Margulis \cite{At1} extended the previous result to the context of unipotent flows and provided further results on logarithm laws for horocycle flows in \cite{At2}.

\bold{BCZ map.}
This paper presents the logarithm laws for the BCZ map regarding its itinerary function. Boca-Cobeli-Zaharescu~\cite{Bo} introduced the BCZ map when proving a conjecture of R.R.Hall \cite{Ha} about the asymptotic analysis of the sum of the squares of the h-gap differences of the Farey sequence. Motivated by the study of orbits of the horocycle flow, which was previously unknown to the BCZ map, Athreya-Cheung \cite{At} showed that the BCZ map can describe the first return map on the Poincar\'{e} section
$$\Omega':=\{\Lambda_{a,b}:=p_{a,b}SL(2,\mathbb{Z})\in\mathbb{R}^2| a,b\in(0,1], a+b>1\}$$
where $$p_{a,b}=
\begin{pmatrix}
a & b\\
0 & a^{-1}
\end{pmatrix}
$$
for the horocycle flow
$$ h_s=
\begin{pmatrix}
1 & 0\\
-s & 1
\end{pmatrix}
:s\in\mathbb{R}$$
on the space of unimodular lattices $X_2=SL(2,\mathbb{R})/SL(2,\mathbb{Z})$ in $\mathbb{R}^2$(\cite{At}, Theorem 1.1). Let $$\Omega:=\left\{(a,b)\in\mathbb{R}^2 \mid a,b\in(0,1],a+b>1\right\}\subset\mathbb{R}^2$$
be the Farey triangle. The first return map $T:\Omega\to\Omega$ defined implicitly by
\begin{equation}\Lambda_{T(a,b)}=h_{R(a,b)}\Lambda_{a,b}, \label{34} \end{equation}
where the first return time $R(a,b)=\frac{1}{ab}$
is given explicitly by the BCZ map
$$T(a,b)=\left(b,-a+\left[\frac{1+a}{b}\right] b\right).$$
We call $k(a,b):=\left[\frac{1+a}{b}\right]$ the itinerary function (or index function) of $(a,b)$.

Our main result can be considered as an extension of Athreya-Margulis's result (\cite{At1}, Proposition 3.3) mainly due to the relationship between horocycle flow and BCZ map. Also, although studying different objects, they surprisingly share the same terms, which we will explain in \S1.3.

\bold{Orbit and itinerary.}
For the Farey sequence of order $n$:
$\rho_0=0<\rho_1=\frac{1}{n}<\rho_2=\frac{1}{n-1}<\rho_3<\rho_4<\cdots<\rho_{A_n-1}=\frac{n-1}{n}<\rho_{A_n}=1$, we extend the sequence by setting $\rho_i=\rho_{A_n+i}$ for all $i\in\mathbb{Z}$ and let $\rho_i=\frac{p_i}{q_i}$ where $p_i,q_i\in\mathbb{N}$ and $(p_i,q_i)=1$.

Boca-Cobeli-Zaharescu \cite{Bo} showed that
$$T\left(\frac{q_{k}}{n},\frac{q_{k+1}}{n}\right)=\left(\frac{q_{k+1}}{n},\frac{q_{k+2}}{n}\right)$$
holds for all $k\in\mathbb{Z}$.
Therefore, $T^k\left(\frac{1}{n},1\right)=\left(\frac{q_{k}}{n},\frac{q_{k+1}}{n}\right)$, which implies $\left(\frac{1}{n},1\right)$'s orbit is periodic with period $A_n$.

In fact, for any rational point $(a,b)\in\Omega$, which means $\frac{b}{a}$ is a rational number, its orbit is periodic. Let $\frac{b}{a}=\frac{p}{q}$, $n=\left[\frac{q}{a}\right]$, then the period of the orbit is $A_n$.
For any $(a,b)\in\Omega$, we denote by $(a_n,b_n)=T^n(a,b)$ for $n\in\mathbb{Z}$ the orbit of $(a,b)$ under the BCZ map, namely the BCZ orbit. We call $(a_n,b_n)_{n=1}^{+\infty}$ the forward orbit, and $(a_n,b_n)_{n=-\infty}^0$ the backward orbit. Then, we define the itinerary sequence for $(a,b)$ as $k_n(a,b):=k\left(T^{n-1}(a,b)\right)=\left[\frac{b_n+1}{a_n}\right]$ for $n\in\mathbb{Z}$. Base on the definition of BCZ map, $a_{n+1}=b_n=k(a_{n-1},b_{n-1})b_{n-1}-a_{n-1}=k(a_{n-1},b_{n-1})a_n-a_{n-1}$. Therefore,
\begin{equation}k_n(a,b)=\frac{a_{n-1}+a_{n+1}}{a_n}   \label{484848}  \end{equation}
When $n>0$, we refer to it as the forward itinerary; whereas for $n\leq0$, we call it the backward itinerary.

The research on the itinerary function of the BCZ map has been extensive. Hall-Shiu \cite{sh}, Hall \cite{Ha1} gave some properties about the itinerary function. Boca-Gologan-Zaharescu \cite{Go} presented some asymptotic formulas regarding the distribution of the itinerary function for periodic BCZ orbits. Alkan-Ledoan-Zaharescu \cite{Al1} and Alkan-Ledoan-V\^aj\^aitu-Zaharescu \cite{Al2} showed some results about the moments of the itinerary function for periodic BCZ orbits.

\bold{Relationship with the RH.}
Zagier \cite{Za} showed that proving an optimal rate of equidistribution for long periodic trajectories of the horocycle flow on $X_2$ (that is, an optimal error term in Sarnak's theorem \cite{Sa}) is equivalent to the classical Riemann hypothesis (RH).

In a forthcoming work, a characterization of the RH given by Franel \cite{Fr} and Landau \cite{La} is reinterpreted in terms of estimates of $L^1$-averages of the BCZ cocycle along periodic orbits of the BCZ map. Subsequently, we consider the problem where the cocycle is replaced by a discrete approximation using $\hat{k}-3:=\frac{k+k^T}{2}-3$, where $k^T(a,b)=k\left(T^{-1}(a,b)\right)=k(b,a)$. We establish that the discretized analog of the RH holds in a stronger sense. This exciting discovery suggests a possible new approach to the RH.

\bold{Partial sum of the itinerary.}
Hall-Shiu \cite{sh} showed that over a periodic orbit, we have
$$\sum_{i=1}^{A_n}k_i\left(\frac{1}{n},1\right)=3A_n-1.$$
Furthermore, for any rational point $(a,b)$ within the periodic orbit with period $A_n$, we also have $\displaystyle\sum_{i=1}^{A_n}k_i\left(a,b\right)=3A_n-1$. Notice that the average value of $k$ in $\Omega$ is $3$, so subtracting  $3A_n$ from both sides yields $\displaystyle\sum_{i=1}^{A_n}\left(k_i\left(a,b\right)-3\right)=-1$. Additionally, Boca-Gologan-Zaharescu \cite{Go} demonstrated that for every positive integer $h$, there exists a rational number $A(h)$ such that
$$\displaystyle\sum_{i=1}^{A_n}\left(k_i\left(a,b\right)k_{i+h}\left(a,b\right)-A(h)\right)=O_h\left(n\log^2n\right).$$

As for the irrational point $(a,b)$, its orbit is non-periodic. One natural question arises: what can be said about the partial sum of the itinerary function over the irrational point's non-periodic orbit, defined as $f_n=\displaystyle\sum_{i=1}^n\left(k_i\left(a,b\right)-3\right)$?

The main result (Theorem \ref{8817}, \ref{8818}) of this paper is the calculation of the limit superior of the log speed of the partial sum of the itinerary function over the irrational point's non-periodic orbit. In fact, we have
$$\limsup_{n\rightarrow+\infty}\frac{\log|f_n|}{\log n}=\max\left\{\frac{e^--1}{e^-},\frac{e^+-2}{e^+}\right\}$$
where $e^+$,$e^-$ represent the even Diophantine exponent and odd Diophantine exponent, respectively (Definition \ref{494949}), which Athreya-Margulis \cite{At1} defined. Furthermore, our result can also be applied to $\hat{k}-3$ (Corollary \ref{9898}).

\subsection{Plan of paper}
In the remainder of the introduction, we define the even and odd Diophantine exponent using two equivalent descriptions. Then we present the main result of this paper (Theorem \ref{8817}, \ref{8818}). In \S2, we provide the preliminary knowledge of the BCZ map in order to prove the main result. We give a detailed description of the BCZ map using the unimodular lattice, define the excursion of the BCZ map and estimate the length of the excursion. In \S3, it is purely technical and we give the full proof of Theorem \ref{8817}. In \S3.1, we define the $h$ function and demonstrate its property (Lemma \ref{97743}). In \S3.4, we prove the overall monotonicity of $h$ in an excursion (Theorem \ref{773}). In \S3.2, \S3.3, \S3.5, and \S3.6, we discuss the main results under different scenarios, where we estimate the log speed in different sections of orbit. In \S3.7, we summarize the results. In \S3.8, we prove the corollary of Theorem \ref{8817} where the $k$ is replaced by $\hat{k}$.

\subsection{Even and odd Diophantine exponent}
For an irrational number $s$, its Diophantine exponent is
$$e(s):=\sup\left\{v\in\mathbb{R}:\left|s-\frac{p}{q}\right|<\frac{1}{q^v}\text{ has infinitely many solutions }\frac{p}{q}\in\mathbb{Q}\right\}.$$

\begin{ddd}(\cite{At1}, section 3)   \label{494949}
For an irrational number $s$, we define its even Diophantine exponent to be
$$e^+(s):=\sup\left\{v\in\mathbb{R}:0<\frac{p}{q}-s<\frac{1}{q^v}\text{ has infinitely many solutions }\frac{p}{q}\in\mathbb{Q}\right\};$$
its odd Diophantine exponent to be
$$e^-(s):=\sup\left\{v\in\mathbb{R}:0<s-\frac{p}{q}<\frac{1}{q^v}\text{ has infinitely many solutions }\frac{p}{q}\in\mathbb{Q}\right\}.$$
\end{ddd}

Let $[c_0;c_1,c_2,\cdots]$ be the continued fraction of $s$, $\frac{p_k'}{q_k'}=[c_0;c_1,c_2\cdots c_k]$, $(k\geq0)$. Let $c_n={q_{n-1}'}^{e_n-2}$ $(n>0)$. The relation between $e$ and $e_n$ can be described by the following equation:
$$e=\inf\left\{e'|c_{n}=O\left({q_{n-1}'}^{e'-2}\right)\right\}=\limsup_{n\rightarrow\infty}e_n;$$
the relation between $e^+$ and $e_n$ can be described by the following equation:
$$e^+=\inf\left\{e'|c_{2n}=O\left({q_{2n-1}'}^{e'-2}\right)\right\}=\limsup_{n\rightarrow\infty}e_{2n};$$
the relation between $e^-$ and $e_n$ can be described by the following equation:
$$e^-=\inf\left\{e'|c_{2n+1}=O\left({q_{2n}'}^{e'-2}\right)\right\}=\limsup_{n\rightarrow\infty}e_{2n+1}.$$
Furthermore, the relationship among $e$, $e^+$ and $e^-$ is $e=\max\{e^+,e^-\}$. We also know that $e,e^+,e^-\geq2$.

\subsection{Main result}

\begin{ttt}  \label{8817}
For an irrational point $(a,b)$ in the BCZ triangle, let $f_n=\displaystyle\sum_{i=1}^n\left(k_i(a,b)-3\right)$ for $n\geq1$. Given that $e^+$ and $e^-$ are the even and odd Diophantine exponent of $\frac{b}{a}$, respectively, we have

$$\limsup_{n\rightarrow+\infty}\frac{\log|f_n|}{\log n}=\max\left\{\frac{e^--1}{e^-},\frac{e^+-2}{e^+}\right\}.$$

\end{ttt}

\begin{ttt}   \label{8818}
For an irrational point $(a,b)$ in the BCZ triangle, let $f_{-n}=\displaystyle\sum_{i=0}^n(k_{-i}(a,b)-3)$ for $n\geq1$. Given that $e^+$ and $e^-$ are the even and odd Diophantine exponent of $\frac{b}{a}$, respectively, we have

$$\limsup_{n\rightarrow+\infty}\frac{\log|f_{-n}|}{\log n}=\max\left\{\frac{e^+-1}{e^+},\frac{e^--2}{e^-}\right\}.$$

\end{ttt}

The asymmetry of these results is partly due to the fact that the log speeds of the horocycle flow's cusp excursion from opposite directions are different. Athreya-Margulis \cite{At1} showed a result stating that given $s\in\mathbb{R}$, let
$$\Lambda_s:=\begin{pmatrix}
1&s\\
0&1
\end{pmatrix}
\mathbb{Z}^2.
$$
Define $\alpha_1: X_2\rightarrow\mathbb{R}^+$ by
$$
\alpha_1(\Lambda):=\sup_{0\neq v\in\Lambda}\frac{1}{\Vert v\Vert}$$
where $\Vert\cdot\Vert$ denotes Euclidean norm on $\mathbb{R}^2$.

\begin{ppp}(\cite{At1}, Proposition 3.3)If $s\notin\mathbb{Q}$, $e^+=e^+(s)$, $e^-=e^-(s)$, then
$$\limsup_{t\rightarrow+\infty}\frac{\log(\alpha_1(h_t\Lambda_s))}{\log(|t|)}=\frac{e^--1}{e^-},$$
$$\limsup_{t\rightarrow-\infty}\frac{\log(\alpha_1(h_t\Lambda_s))}{\log(|t|)}=\frac{e^+-1}{e^+}.$$
\end{ppp}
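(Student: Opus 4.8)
The plan is to read the size of $\alpha_1(h_t\Lambda_s)$ off the continued fraction of $s$, and then match the two \emph{one-sided} Diophantine conditions to the two flow directions. Writing $g_s=\bigl(\begin{smallmatrix}1&s\\0&1\end{smallmatrix}\bigr)$, we have $h_t\Lambda_s=h_tg_s\mathbb{Z}^2$, so every nonzero vector of $h_t\Lambda_s$ is $v_{m,n}=(w,\,n-tw)$ with $w=w_{m,n}:=m+ns$ and $(m,n)\in\mathbb{Z}^2\setminus\{0\}$. Let $v_{m,n}$ be a shortest nonzero vector of $h_t\Lambda_s$, so that $\|v_{m,n}\|=\alpha_1(h_t\Lambda_s)^{-1}$ and $(m,n)$ is primitive. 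When $\alpha_1(h_t\Lambda_s)$ is large we must have $n\neq0$ (else $\|v_{m,n}\|\geq|w|\geq1$), $w\neq0$ (as $s\notin\mathbb{Q}$), and, after replacing $(m,n)$ by $-(m,n)$ if needed, $n\geq1$; since then $|n-tw|<1\leq n$, the product $tw$ is positive. As $w=n(s+\tfrac mn)$, this says $t>0$ exactly when $-\tfrac mn<s$, and $t<0$ exactly when $-\tfrac mn>s$. Hence the excursions with $t\to+\infty$ are driven by lower one-sided approximations of $s$, governed by $e^-$, and those with $t\to-\infty$ by upper ones, governed by $e^+$. I will carry out the case $t\to+\infty$; the case $t\to-\infty$ is its mirror image, with the roles of $e^+$ and $e^-$, and of even- and odd-indexed convergents and partial quotients, interchanged.

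\emph{Lower bound.} Fix $v$ with $1<v<e^-$. By definition of $e^-$ there are infinitely many coprime $(m_j,n_j)$ with $n_j\to\infty$ and $0<w_j<n_j^{\,1-v}$. Put $t_j:=n_j/w_j>n_j^{\,v}\to+\infty$; the test vector $v_{m_j,n_j}=(w_j,0)$ then gives $\alpha_1(h_{t_j}\Lambda_s)\geq w_j^{-1}$. Since $\log|t_j|=\log n_j+\log(1/w_j)$ with $\log(1/w_j)>(v-1)\log n_j$, and $x\mapsto\frac{x}{1+x}$ is increasing, $\dfrac{\log\alpha_1(h_{t_j}\Lambda_s)}{\log|t_j|}\geq\dfrac{\log(1/w_j)}{\log n_j+\log(1/w_j)}>\dfrac{v-1}{v}$. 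Letting $j\to\infty$ and then $v\uparrow e^-$ yields $\limsup_{t\to+\infty}\frac{\log\alpha_1(h_t\Lambda_s)}{\log|t|}\geq\frac{e^--1}{e^-}$.

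\emph{Upper bound.} The crude estimate $\alpha_1(h_t\Lambda_s)\leq C_s(1+|t|)$, obtained by bounding the least singular value of $h_tg_s$ below by $\|g_{-s}h_{-t}\|^{-1}$, already gives $\limsup\leq1$ and disposes of $e^-=\infty$. Assume $e^-<\infty$, put $L:=\frac{e^--1}{e^-}\geq\frac12$, and suppose for contradiction that $\alpha_1(h_{t_j}\Lambda_s)=:M_j>t_j^{\,L+\varepsilon}\to\infty$ along some $t_j\to+\infty$, with the associated primitive $(m_j,n_j)$, $n_j\geq1$, $0<w_j\leq M_j^{-1}$, $|n_j-t_jw_j|\leq M_j^{-1}<1$. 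If $n_j\geq M_j/2$, then $t_jw_j>n_j-1\geq\tfrac{n_j}{2}$ (the case $n_j=1$ being impossible once $M_j>2$) and $1/w_j>M_j$, hence $t_j>\tfrac14M_j^{2}$, i.e. $M_j<2\sqrt{t_j}\leq2t_j^{\,L}<t_j^{\,L+\varepsilon}$ for $t_j$ large --- a contradiction. If $n_j<M_j/2$, then $|s-(-m_j/n_j)|=w_j/n_j<\tfrac1{M_jn_j}<\tfrac1{2n_j^{2}}$, so Legendre's theorem makes $-m_j/n_j$ a convergent of $s$, necessarily an even-indexed one $p_{\ell_j}'/q_{\ell_j}'$ because it is a lower approximation; thus $n_j=q_{\ell_j}'$, $w_j\asymp1/q_{\ell_j+1}'$, $M_j<2q_{\ell_j+1}'$, and $t_j\asymp q_{\ell_j}'q_{\ell_j+1}'$ (the factor $q_{\ell_j}'$ coming from $t_jw_j\asymp n_j$), with $q_{\ell_j+1}'\to\infty$ and so $\ell_j\to\infty$. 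The recurrence $q_{\ell_j+1}'=c_{\ell_j+1}q_{\ell_j}'+q_{\ell_j-1}'$ together with $c_{\ell_j+1}=(q_{\ell_j}')^{\,e_{\ell_j+1}-2}$ gives $\log q_{\ell_j+1}'=(e_{\ell_j+1}-1)\log q_{\ell_j}'+O(1)$, and since $\ell_j+1$ is odd and $e^-=\limsup_ne_{2n+1}$ we have $e_{\ell_j+1}\leq e^-+o(1)$; feeding this into $\log M_j\leq\log q_{\ell_j+1}'+O(1)$ and $\log|t_j|=e_{\ell_j+1}\log q_{\ell_j}'+O(1)$ gives $\frac{\log M_j}{\log|t_j|}\leq\frac{e^--1}{e^-}+o(1)<L+\varepsilon$ for $j$ large --- again a contradiction. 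Hence $\limsup_{t\to+\infty}\frac{\log\alpha_1(h_t\Lambda_s)}{\log|t|}\leq L$, which together with the lower bound gives the first identity; the second follows verbatim with $e^+$ in place of $e^-$ and the parities switched.

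I expect the case analysis in the upper bound to be the main obstacle. One has to argue that the shortest vector is genuinely attached to a convergent of $s$ --- this is exactly what the dichotomy between $n_j\geq M_j/2$ and $n_j<M_j/2$ combined with Legendre's theorem provides --- and one has to upgrade the easy bound $t_j\gg q_{\ell_j+1}'$ to the two-sided estimate $t_j\asymp q_{\ell_j}'q_{\ell_j+1}'$, since it is precisely the extra factor $q_{\ell_j}'$ that turns $\limsup\leq1$ into $\limsup\leq\frac{e^--1}{e^-}$. A secondary but genuine nuisance, and the source of the asymmetry in the statement, is keeping track of which one-sided inequality --- equivalently which parity of convergent, equivalently which of $e^{+},e^{-}$ --- corresponds to each sign of $t$; a slip here would exchange the two limits.
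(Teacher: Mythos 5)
This proposition is the one statement in the paper that is quoted, not proved: it is cited verbatim from Athreya--Margulis (\cite{At1}, Proposition 3.3), so there is no internal proof to measure yours against. Your blind argument is correct and is essentially the standard proof of that result: identify the vectors of $h_t\Lambda_s$ as $(w,\,n-tw)$ with $w=m+ns$, observe that a very short vector forces $tw>0$, so that excursions with $t\to+\infty$ come from rationals $-m/n<s$ (hence $e^-$) and those with $t\to-\infty$ from rationals above $s$ (hence $e^+$) --- this sign/parity matching, which you rightly single out as the main hazard, is consistent with the statement as quoted; then the lower bound from the one-sided Diophantine condition with $t_j=n_j/w_j$, and the upper bound via the dichotomy $n_j\gtrless M_j/2$ plus Legendre's theorem, giving $M_j\asymp q_{\ell+1}'$ at times $t_j\asymp q_\ell'q_{\ell+1}'$ and hence the exponent $\frac{e^--1}{e^-}$ through $\log q_{\ell+1}'=(e_{\ell+1}-1)\log q_\ell'+O(1)$. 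The only loose ends are routine bookkeeping (absorbing the $O(1)$'s and replacing $e_{\ell_j+1}\leq e^-+o(1)$ by a fixed small $\epsilon$ before deriving the contradiction, and the crude bound $\alpha_1(h_t\Lambda_s)\leq C_s(1+|t|)$ handling $e^-=\infty$), none of which is a gap. It is worth noting that your two key asymptotics are exactly the ones this paper develops for its BCZ analogue: compare $M\asymp q_{\ell+1}'$, $t\asymp q_\ell'q_{\ell+1}'$ with $f_{n_{c_k'}}\asymp 1/a_{n_{c_k'}}\asymp q_{2k+1}'$ in (\ref{662}) and $n_{c_k'}\asymp q_{2k}'q_{2k+1}'$ in (\ref{663}), obtained there via Lemma \ref{9932}; so your proof runs parallel to the machinery of \S3.2, and the extra term $\frac{e^+-2}{e^+}$ in Theorem \ref{8817} is precisely what has no counterpart in the proposition you proved.
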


We can see that this result shares the same terms with Theorem \ref{8817}, and \ref{8818}. The extra terms in Theorem \ref{8817}, and \ref{8818} come from the BCZ map's unique structure, which also make the main result much harder to prove than the propositions above although they appear similar.

Since almost all irrational numbers' Diophantine exponents are $2$, which means $e^+=e^-=2$. Thus for almost all $s$, we have
$$\limsup_{|t|\rightarrow\infty}\frac{\log(\alpha_1(h_t\Lambda_s))}{\log(|t|)}=\frac{1}{2},$$
which can also be deduced by Theorem 2.1 of \cite{At1}.

Similarly, we can have the following corollary from Theorem \ref{8817}, and \ref{8818}.

\begin{ccc}  \label{8832}
For almost every irrational point $(a,b)$ in the BCZ triangle, the Diophantine exponent of $\frac{b}{a}$ is $2$, so both its even Diophantine exponent and odd Diophantine exponent are $2$, which means that
$$\limsup_{n\rightarrow+\infty}\frac{\log |f_n|}{\log n}=\limsup_{n\rightarrow+\infty}\frac{\log |f_{-n}|}{\log n}=\frac{1}{2}.$$
\end{ccc}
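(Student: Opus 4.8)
The plan is to derive Corollary \ref{8832} as a direct consequence of Theorems \ref{8817} and \ref{8818} together with the classical fact that Lebesgue-almost every real number has diophantine exponent equal to $2$. First I would recall why almost every irrational $s$ satisfies $e(s)=2$: by Dirichlet's theorem every irrational has infinitely many rational approximations $p/q$ with $|s-p/q|<q^{-2}$, so $e(s)\geq 2$ always; conversely, for any $v>2$ the Borel--Cantelli lemma applied to the sets $\bigcup_{q}\bigcup_{p}(p/q-q^{-v},p/q+q^{-v})$ (whose measure, intersected with a bounded interval, is summable since $\sum_q q\cdot q^{-v}<\infty$ when $v>2$) shows that only a null set of $s$ admits infinitely many solutions to $|s-p/q|<q^{-v}$. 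Hence $e(s)=2$ for a.e.\ $s$.

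Next I would push this from $e$ down to the one-sided exponents $e^+$ and $e^-$. Since we always have $2\leq e^+(s)\leq e(s)$ and $2\leq e^-(s)\leq e(s)$ (the one-sided approximation conditions are more restrictive than the two-sided one, while the lower bound $e^\pm\geq 2$ follows because the even-indexed and odd-indexed continued fraction convergents each approximate $s$ from one fixed side with error at most $q^{-2}$), the a.e.\ equality $e(s)=2$ forces $e^+(s)=e^-(s)=2$ for a.e.\ $s$. It remains to transfer this from $s$ to the point $(a,b)$: for an irrational point $(a,b)$ in the Farey triangle we set $s=b/a$, and I would invoke the fact that the map $(a,b)\mapsto b/a$ is a smooth submersion from the (two-dimensional) Farey triangle $\Omega$ onto an interval, so it is absolutely continuous and sends the null set $\{s : e^+(s)>2 \text{ or } e^-(s)>2\}$ back to a Lebesgue-null subset of $\Omega$; therefore for a.e.\ $(a,b)\in\Omega$ the quotient $b/a$ has $e^+=e^-=2$.

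Finally I would substitute $e^+=e^-=2$ into the formulas of Theorems \ref{8817} and \ref{8818}. For Theorem \ref{8817} this gives $\max\{(e^--1)/e^-,(e^+-2)/e^+\}=\max\{1/2,0\}=1/2$, and for Theorem \ref{8818} it gives $\max\{(e^+-1)/e^+,(e^--2)/e^-\}=\max\{1/2,0\}=1/2$, which is precisely the claimed common value for $\limsup_n \frac{\log|f_n|}{\log n}$ and $\limsup_n \frac{\log|f_{-n}|}{\log n}$.

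The only real content beyond bookkeeping is the measure-theoretic transfer step: one must be careful that the fibers of $(a,b)\mapsto b/a$ behave well enough that a null set of quotients pulls back to a null set in $\Omega$. This is where I expect the main (though still routine) obstacle to lie, and I would handle it either by the coarea/Fubini argument sketched above or simply by noting that for each fixed $a$ the slice $\{b : b/a \in N\}$ has one-dimensional measure zero whenever $N$ has measure zero, and then integrating in $a$.
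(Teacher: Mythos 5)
Your proposal is correct and follows essentially the same route the paper intends: the corollary is an immediate substitution of $e^+=e^-=2$ into Theorems \ref{8817} and \ref{8818}, combined with the classical fact (which the paper simply cites) that almost every real number has diophantine exponent $2$, together with $2\leq e^\pm\leq e$ so that $e=2$ forces $e^+=e^-=2$. Your additional Borel--Cantelli justification and the Fubini/slicing argument showing that the null set of bad slopes $b/a$ pulls back to a null subset of the Farey triangle are routine details the paper leaves implicit, and they are handled correctly.
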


We mentioned previously that $\hat{k}-3$ can be of significance in future research, and we have discovered that the main result can still apply to it.

\begin{ccc} \label{9898}
Theorem \ref{8817}, \ref{8818}, and Corollary \ref{8832} still hold true if we replace $k_i-3=k\circ T^{i-1}-3$ with $\hat{k}\circ T^{i-1}-3$.

\end{ccc}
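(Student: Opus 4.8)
The plan is to reduce Corollary~\ref{9898} to Theorems~\ref{8817} and~\ref{8818} by showing that replacing the partial sums of $k-3$ by the partial sums of $\hat k - 3 = \tfrac12(k + k^T) - 3$ perturbs $f_n$ (respectively $f_{-n}$) by a quantity that is negligible on the logarithmic scale $\log n$. Recall $k^T(a,b) = k(b,a) = k(T^{-1}(a,b))$, so that if we write $g_n := \sum_{i=1}^n (\hat k \circ T^{i-1}(a,b) - 3)$ then
\begin{equation*}
g_n = \tfrac12 \sum_{i=1}^n \bigl(k\circ T^{i-1}(a,b) - 3\bigr) + \tfrac12 \sum_{i=1}^n \bigl(k\circ T^{i-2}(a,b) - 3\bigr) = \tfrac12 f_n + \tfrac12 \Bigl(f_n - (k\circ T^{n-1}(a,b)-3) + (k\circ T^{-1}(a,b) - 3)\Bigr),
\end{equation*}
after telescoping the shifted sum $\sum_{i=1}^n (k \circ T^{i-2} - 3) = \sum_{j=0}^{n-1}(k\circ T^{j-1}-3) = f_n + (k_0 - 3) - (k_n - 3)$ in the notation $k_i = k\circ T^{i-1}$. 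Hence
\begin{equation*}
g_n = f_n + \tfrac12\bigl(k_0(a,b) - k_n(a,b)\bigr),
\end{equation*}
and symmetrically for the backward sums one gets $g_{-n} = f_{-n} + \tfrac12(k_1(a,b) - k_{-n-1}(a,b))$ or a similar bounded-shift correction. So the entire content of the corollary is controlling a single itinerary value $k_n(a,b)$ relative to the running sum.

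First I would establish the elementary bound $k_i(a,b) = \bigl[\tfrac{1+b_i}{a_i}\bigr]$, and recall from \S2 (the excursion estimates) that along any orbit the itinerary values $k_n$ grow at most polynomially in $n$; more precisely, by the correspondence with continued fractions and the excursion length estimates, $\log k_n(a,b) = O(\log n)$, and in fact one expects $\limsup \frac{\log k_n}{\log n}$ to be bounded by something like $\max\{\frac{e^+-2}{e^+}, \frac{e^--2}{e^-}\}$ or a comparable exponent strictly dominated by the exponent appearing in Theorem~\ref{8817}. The key point is that a single spike $k_n$ is always dominated — on the $\log n$ scale — by the accumulated sum $f_n$ whenever $f_n$ is itself large; when $f_n$ is small (bounded, or growing slowly), then $g_n = f_n + O(k_n)$ and we need the large-index values $k_n$ to not create spurious contributions to the limsup for $g_n$ that exceed the target. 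This is exactly where one invokes the excursion structure: a large value of $k_n$ at step $n$ forces $f_n$ (or $f_{n-1}$) to already be comparably large, because a long excursion is precisely a stretch of steps where the partial sum of $k-3$ has moved far from its mean. So the correction term $k_0 - k_n$ cannot inflate the limsup.

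Concretely, the key steps in order: (1) derive the exact telescoping identity $g_n = f_n + \tfrac12(k_0 - k_n)$ and its backward analogue; (2) prove $\limsup_{n\to\infty}\frac{\log k_n(a,b)}{\log n} \le M$, where $M = \max\{\frac{e^--1}{e^-}, \frac{e^+-2}{e^+}\}$ is the right-hand side of Theorem~\ref{8817}, using the excursion-length estimates of \S2 together with the continued-fraction description of $e^\pm$; (3) conclude via the general fact that if $\log|g_n| = \log|f_n + O(k_n)|$ with both $\limsup\frac{\log|f_n|}{\log n}$ and $\limsup\frac{\log k_n}{\log n}$ at most $M$, and the first limsup equals $M$, then $\limsup\frac{\log|g_n|}{\log n} = M$ as well (the triangle inequality bounds it above by $M$, and $|g_n| \ge |f_n| - O(k_n)$ along the subsequence realizing the limsup of $f_n$ gives the matching lower bound, after checking that on that subsequence $k_n = o(|f_n|)$ — which holds because $f_n$ grows like $n^{M}$ up to subpolynomial factors along that subsequence while $k_n$ is controlled). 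Repeat verbatim for Theorem~\ref{8818}; Corollary~\ref{8832} then follows since for $e^+=e^-=2$ all exponents collapse to $\tfrac12$.

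The main obstacle is step~(3)'s lower bound: one must ensure that along the specific subsequence $n_j$ where $\frac{\log|f_{n_j}|}{\log n_j}\to M$, the perturbation $k_{n_j}$ does not cancel $f_{n_j}$ down to something smaller. The clean way around this is to choose the extremizing subsequence more carefully — e.g.\ taking $n_j$ to be the \emph{end} of a long excursion, where by the monotonicity of $h$ within an excursion (Theorem~\ref{773}) the partial sum $f_{n_j}$ attains its extreme value and $k_{n_j}$ is the last (and controlled) index of that excursion, so no cancellation can occur. Alternatively, if $|f_n|$ is genuinely unbounded one can pass to $n_j$ where $|f_{n_j}|$ is a running maximum, forcing $|g_{n_j}| \ge |f_{n_j}| - \tfrac12 k_{n_j} \ge |f_{n_j}|(1 - o(1))$ by step~(2). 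Either route is short once the excursion machinery of \S2--\S3 is in hand; the corollary is genuinely a corollary and does not require re-running the case analysis.
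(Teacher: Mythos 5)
Your telescoping identity $\hat f_n = f_n + \tfrac12(k_0 - k_n)$ is exactly right (it is the same identity the paper uses, written there as $\hat f_n = f_{n-1} + \tfrac{k_n}{2} + \tfrac{k_0}{2} - 3$), but the lower-bound step (3) has a genuine gap: the claim that $k_{n_j} = o(|f_{n_j}|)$ along the extremizing subsequence is false, and your proposed remedies pick out precisely the indices where it fails. The subsequence realizing the $\tfrac{e^--1}{e^-}$ part of the limsup consists of the record times $n = n_{c_k'}$, i.e.\ the endpoints of the deepest excursions. By Remark \ref{64}, at the end of an excursion one has $a_{n-1} = a_{n_{c_k'-1}} + \left[\frac{1-a_{n_{c_k'-1}}}{a_n}\right]a_n \in (1-a_n,1]$ and $a_{n+1} = b_n \in (1-a_n,1]$, so $k_n = \frac{a_{n-1}+a_{n+1}}{a_n} \geq \frac{2}{a_n} - 2$, while $f_n$ is only $\approx \frac{1}{a_n}$ (estimate (\ref{661})). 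Hence the correction $-\tfrac12 k_n$ cancels $f_n$ almost entirely at these indices: indeed $\hat f_{n_{c_k'}} \approx \sum_{j\le k}(c_{2j+1}-c_{2j}) + O(1)$, which is $o(1/a_{n_{c_k'}})$ by Remark \ref{771}. So neither ``take the end of a long excursion'' nor ``take running maxima of $|f_n|$'' works — those are exactly the cancellation points — and your step (2) bound $\limsup \log k_n/\log n \le M$ cannot rescue the argument, since it only says $k_n$ and $f_n$ may both be of size $n^{M+o(1)}$, not that one is negligible against the other.

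The fix is an index shift rather than an upper bound on $k_n$, and this is what the paper does: from $\hat f_{n} = f_{n-1} + \tfrac{k_{n}+k_0}{2} - 3$ and the trivial bound $k_n \ge 1$ one gets $\hat f_n \ge f_{n-1} - C$ for a constant $C$ depending only on $k_0$, so evaluating $\hat f$ one step \emph{after} an extremal time of $f$ transfers the lower bound with only a constant loss; symmetrically $f_n = \hat f_n + \tfrac{k_n - k_0}{2} \ge \hat f_n - C'$ gives the reverse comparison. These two one-sided inequalities with constant errors (the $k_n$ term always enters with a favorable sign in one of the two pairings) yield equality of the log-limsups directly, with no excursion machinery, no polynomial bound on $k_n$, and no subsequence bookkeeping. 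Your proposal would be repaired by replacing step (3) with this shifted comparison; as written, the lower-bound argument does not go through. The same remark applies to your backward case and, with $e^+=e^-=2$, to Corollary \ref{8832}.
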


\section{Preliminary}

$\Omega'$ comprises all unimodular lattices that have a primitive point lying in $(0,1]\times\{0\}$. In the case of $\Lambda_{a,b}$, this primitive point is $(a,0)$. Under the horocycle flow $h_t$, every point of $\Lambda_{a,b}$ in $(0,1]\times\mathbb{R}^+$ maintains its x-coordinate while its slope decreases by $t$, resulting in a downward trajectory along a straight line. This implies that under $h_t$, each primitive point of $\Lambda_{a,b}$ in $(0,1]\times\mathbb{R}^+$ will successively intersect $(0,1]\times\{0\}$ at the time of its slope's value. Specifically, when $(b,\frac{1}{a})$, which is the primitive point in $(0,1]\times\mathbb{R}^+$ with the smallest slope, intersects in $(0,1]\times\{0\}$ and becomes $(b,0)$ at time $t=R(a,b)=\frac{1}{ab}$, the lattice transforms into $\Lambda_{T(a,b)}$. And that's the meaning of (\ref{34}).

For a BCZ orbit $T^i(a_0,b_0)=(a_i,b_i)$, if we select the primitive point of $\Lambda_{a_0,b_0}$ in $(0,1]\times\mathbb{R}^+$ with the smallest slope, then previously mentioned, its x-coordinate is $a_1=b_0$, which is the x-coordinate of $T(a_0,b_0)=(a_1,b_1)$. If we continue this process iteratively, we can establish the following:
\begin{rrr}   \label{81}
The x-coordinate of the primitive point of $\Lambda_{a_0,b_0}$ within $(0,1]\times\mathbb{R}^+$ having the $i$-th smallest slope is $a_i$, which is the x-coordinate of $T^i(a_0,b_0)=(a_i,b_i)$ for $i\geq1$.
\end{rrr}

\begin{ddd}   \label{74}   (excursion)

Let the depth of a point $(a,b)$ be $\frac{1}{a}$. If two points of a BCZ orbit are deeper than all the points lying between these two points, we refer to the process from one of these two points to the other as an excursion. For example, let $(a_n,b_n)=T^n(a,b)$, $s<t$, $s,t\in\mathbb{Z}$. If
$$\frac{1}{a_m}<\frac{1}{a_s},\frac{1}{a_t},$$
for any $m\in(s,t)$, then the process that starts from $(a_s,b_s)$ and stops when it reaches $(a_t,b_t)$ for the first time is considered an excursion. In the sequel, for convenience, we will omit the phrase ``for the first time" and simply refer to the process from $(a_s,b_s)$ to $(a_t,b_t)$ as an excursion.

\end{ddd}

For an excursion from $(a_0,b_0)$ to $T^s(a_0,b_0)=(a_s,b_s)$, let $O$ be $(0,0)$, $A$ be $(a_0,0)$, and $S$ be the primitive point of $\Lambda_{a_0,b_0}$ in $(0,1]\times\mathbb{R}^+$ with the s-th smallest slope. By Remark \ref{81}, we know that $a_s$ is the x-coordinate of $S$, and $a_0$ is the x-coordinate of $A$. Additionally, for $i\in[1,s-1]$, $a_i$ are the x-coordinates of all the primitive points of $(0,1]\times\mathbb{R}^+$ whose slopes are between 0 and $S$'s slope.

Then according to the definition of the excursion, we know that $a_i>a_0,a_s$ for $i\in(0,s)$, which means that there are no other points inside the triangle $OAS$ (including the sides). Thus, the area of $OAS$ is $\frac{1}{2}$ according to Pick's Theorem, considering that $\Lambda_{a_0,b_0}$ is a unimodular lattice. This implies that $\overrightarrow{OA}$ and $\overrightarrow{OS}$ form a set of basis of $\Lambda_{a,b}$. Therefore, the coordinate of $S$ is $(a_s,\frac{1}{a_0})$, and all the primitive points of $(0,1]\times\mathbb{R}^+$ whose slopes are between 0 and $S$'s slope can be generated by $(a_0,0)$ and $(a_s,\frac{1}{a_0})$.

\begin{rrr}   \label{64}
From above, we know that for the primitive point with the $i$-th smallest slope, where $i\in[1,s-1]$, there exists a coprime pair $(u_i,v_i)$ such that the coordinate of this primitive point is given by
$$u_i\left(a_0,0\right)+v_i\left(a_s,\frac{1}{a_0}\right)=\left(u_ia_0+v_ia_s,\frac{v_i}{a_0}\right).$$
Since $a_i$ is the x-coordinate of this primitive point, we have
$$a_i=u_ia_0+v_ia_s.$$
So for every $a_i$, there exists a coprime pair $(u_i,v_i)$ such that $u_ia_0+v_ia_s=a_i\leq1$. Conversely, for every coprime pair $(u,v)$ such that $ua_0+va_s\leq1$, $u(a_0,0)+v(a_s,\frac{1}{a_0})$ represents one of the primitive points in $(0,1]\times\mathbb{R}^+$ whose slopes are between 0 and $S$'s slope. This means there exists an $a_i$ for $i\in[1,s-1]$ such that $ua_0+va_s=a_i$.

In summary, we have the following one-to-one correspondence:
$$\{a_i\mid i\in[1,s-1]\}\longleftrightarrow\{(u,v)\mid u,v\in\mathbb{Z}^+,(u,v)=1,ua_0+va_s\leq1\}.$$

The number of these primitive points is $s-1$ which is also the number of pairs of coprime positive integers $(u,v)$ such that $ua_0+va_s\leq1$.

For $1\leq i<j\leq s-1$, since the slope of $(u_ia_0+v_ia_s,\frac{v_i}{a_0})$ is smaller than the slope of $(u_ja_0+v_ja_s,\frac{v_j}{a_0})$, we have
$$\frac{v_i}{u_i}<\frac{v_j}{u_j}.$$
Two points worth mentioning are $a_1$ and $a_{s-1}$.
For $a_1$, we know that $\frac{v_1}{u_1}<\frac{v_i}{u_i}$ for $i\in[2,s-1]$, since $1\leq v_i$, $\left[\frac{1-a_s}{a_0}\right]\geq u_i$, $\left[\frac{1-a_s}{a_0}\right]a_0+a_s\leq1$, we can deduce that $u_1=\left[\frac{1-a_s}{a_0}\right]$, $v_1=1$, $a_1=\left[\frac{1-a_s}{a_0}\right]a_0+a_s$.
Similarly, we have $u_{s-1}=1$, $v_{s-1}=\left[\frac{1-a_0}{a_s}\right]$, $a_{s-1}=a_0+\left[\frac{1-a_0}{a_s}\right]a_s$.

\end{rrr}

Next, we give the estimate of the length of the excursion.

\begin{llll}       \label{65}     (length of the excursion)

For an excursion $(a_i,b_i)_{i=0}^s$ where $s>0$, $a_0=a$, $a_s=b$, we have
$$s=\frac{3}{\pi^2}\frac{1}{ab}+O\left(\max\left\{\frac{1}{a},\frac{1}{b}\right\}\log\left(\min\left\{\frac{1}{a},\frac{1}{b}\right\}\right)\right).$$

\end{llll}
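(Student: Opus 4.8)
The plan is to translate the geometric length of the excursion into a visible–lattice–point count and then estimate that count by the area of a triangle, stripping the coprimality (visibility) condition via M\"obius inversion. By Remark~\ref{64}, with $a=a_0$ and $b=a_s$, the number of intermediate points is
$$ s-1 \;=\; \#\big\{(u,v)\in\mathbb{Z}^2 : u,v\ge 1,\ \gcd(u,v)=1,\ ua+vb\le 1\big\}. $$
Set $N(t):=\#\{(u,v)\in\mathbb{Z}^2: u,v\ge 1,\ ua+vb\le t\}$. Writing $[\gcd(u,v)=1]=\sum_{d\mid\gcd(u,v)}\mu(d)$ and substituting $(u,v)=d(u',v')$, which turns $ua+vb\le 1$ into $u'a+v'b\le 1/d$, the first step records
$$ s-1 \;=\; \sum_{d\ge 1}\mu(d)\,N(1/d)\;=\;\sum_{d\le D}\mu(d)\,N(1/d), \qquad D:=\Big\lfloor \tfrac{1}{a+b}\Big\rfloor, $$
the sum being finite because the triangle defining $N(t)$ is empty for $t<a+b$.

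Next I would estimate $N(t)$ for $t\ge a+b$. The region $\{x\ge 1,\ y\ge 1,\ ax+by\le t\}$ is the triangle with vertices $(1,1)$, $\big(1,\tfrac{t-a}{b}\big)$, $\big(\tfrac{t-b}{a},1\big)$, whose area is $\tfrac{(t-a-b)^2}{2ab}$ and whose perimeter is $O\big(\tfrac{t}{a}+\tfrac{t}{b}\big)$. The standard comparison of the number of lattice points in a convex body with its area, with error bounded by the perimeter, gives
$$ N(t)\;=\;\frac{(t-a-b)^2}{2ab}\;+\;O\!\left(\frac{t}{a}+\frac{t}{b}+1\right). $$
Then I would assemble the M\"obius sum: expanding $(1/d-a-b)^2=d^{-2}-2(a+b)d^{-1}+(a+b)^2$ and using the elementary estimates $\sum_{d\le D}\mu(d)d^{-2}=\zeta(2)^{-1}+O(D^{-1})=\tfrac{6}{\pi^2}+O(a+b)$, $\sum_{d\le D}\mu(d)d^{-1}=O(1)$, and $\sum_{d\le D}\mu(d)=O(D)=O\big((a+b)^{-1}\big)$, the leading contribution of the areas is
$$ \frac{1}{2ab}\cdot\frac{6}{\pi^2}\;=\;\frac{3}{\pi^2}\cdot\frac{1}{ab}, $$
while the remaining area contributions, together with $\tfrac{1}{2ab}O(a+b)$, amount to $O\big(\tfrac{a+b}{ab}\big)=O\big(\tfrac1a+\tfrac1b\big)$. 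The accumulated error from the lattice-point/area comparison is
$$ \sum_{d\le D}O\!\left(\frac{1}{da}+\frac{1}{db}+1\right)\;=\;O\!\left(\Big(\tfrac1a+\tfrac1b\Big)\log D + D\right). $$
Since $a+b>\max\{a,b\}$ gives $D<\min\{\tfrac1a,\tfrac1b\}$, and $\tfrac1a+\tfrac1b\asymp\max\{\tfrac1a,\tfrac1b\}$, all error terms collapse into $O\big(\max\{\tfrac1a,\tfrac1b\}\log\min\{\tfrac1a,\tfrac1b\}\big)$; adding the harmless $s=(s-1)+1$ yields the claim, the shallow range $a+b\gtrsim 1$ (where $s=O(1)$) being trivial.

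The step I expect to be the main obstacle is making the two error estimates genuinely \emph{uniform} in the small parameters $a,b$: the lattice-point-versus-area bound must survive the regime where the triangle is long and thin, with aspect ratio as large as $\max\{1/a,1/b\}$, and this — combined with truncating the M\"obius series at $D\asymp 1/(a+b)$ — is precisely what forces the logarithmic factor $\log\min\{1/a,1/b\}$ in the error, which enters through $\sum_{d\le D}\tfrac1d$.
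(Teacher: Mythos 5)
Your proposal is correct and follows essentially the same route as the paper: both reduce $s-1$ via Remark \ref{64} to counting coprime pairs $(u,v)$ with $ua+vb\le 1$, apply M\"obius inversion, and estimate each unrestricted count by the area of the corresponding triangle with a boundary error, yielding the main term $\frac{3}{\pi^2}\frac{1}{ab}$ and a logarithmic error from $\sum_{d}\frac{1}{d}$. The only differences are cosmetic (truncation at $d\le 1/(a+b)$ versus $d\le 1/\max\{a,b\}$, and a slightly more explicit triangle-area expansion), and your sharper bookkeeping of the secondary terms is consistent with the paper's stated error.
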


\begin{proof}[proof]

By Remark \ref{64}, we know that
\begin{equation}
s-1=\mid\{(u,v)\mid ua+vb\leq1,(u,v)=1\}\mid.        \label{45}
\end{equation}

Without the loss of generality, we assume that $a\geq b$.

\begin{align*}
s-1&=\sum_{(u,v)=1, \text{ }ua+vb\leq1}1\\
&=\sum_{ua+vb\leq1, \text{ }d\mid u,\text{ }d\mid v}\mu(d)\\
&=\sum_{d\leq\frac{1}{a}}\mu(d)\sum_{ua+vb\leq\frac{1}{d}}1\\
&=\sum_{d\leq\frac{1}{a}}\mu(d)\left(\frac{1}{2abd^2}+O\left(\frac{1}{bd}\right)\right)\\
&=\frac{1}{2ab}\sum_{d\leq\frac{1}{a}}\frac{\mu(d)}{d^2}+O\left(\frac{1}{b}\log\left(\frac{1}{a}\right)\right)\\
&=\frac{1}{2ab}\sum_{d=1}^\infty\frac{\mu(d)}{d^2}+O\left(\frac{1}{b}\log\left(\frac{1}{a}\right)\right)\\
&=\frac{3}{\pi^2}\frac{1}{ab}+O\left(\frac{1}{b}\log\left(\frac{1}{a}\right)\right).
\end{align*}

\end{proof}

Theorem \ref{65} is equivalent to Theorem 3.1 of \cite{No}.

$\Lambda_{a,b}$ is a lattice generated by $(a,0)$ and $(b,\frac{1}{a})$. We sort
$$\left\{\left(qb-pa,\frac{q}{a}\right)|\text{ primitive point of }\Lambda_{a,b}, qb-pa\in(0,1]\right\}$$
in ascending order based on the slope of each point measured from the origin. Let the sequence be $(q_nb-p_na,\frac{q_n}{a}),n\in\mathbb{Z}$ such that $q_1=1,p_1=0, q_0=0, p_0=-1$. According to the given conditions, we have $p_n,q_n>0$ for $n>1$ and $p_n,q_n<0$ for $n<0$. Furthermore, we have $\frac{p_i}{q_i}<\frac{p_j}{q_j}<\frac{b}{a}$ for $i<j$.

Let$(a_n,b_n)=T^n(a,b)$, then based on Remark \ref{81}, we have
$$a_n=q_nb-p_na,$$
$$b_n=\frac{q_n}{a}.$$

For any irrational point $(a,b)$ in the BCZ triangle, consider the subsequence $(a_{n_i},b_{n_i})_{i=0}^{+\infty}$ of the forward orbit of $(a,b)$ ($0<n_i<n_j$ for $i<j$). This subsequence is chosen such that for each $i$, $a_{n_i}<a_k$ for all $0\leq k< n_i$. In other words, every point in the subsequence is deeper than all previous points in the forward orbit. We want to know the explicit expression of $(a_{n_i},b_{n_i})$.

\begin{llll}   \label{882}
$(a,b)$ is an irrational point in the BCZ triangle. Let $[c_0;c_1,c_2,\cdots]$ be the continued fraction of $\frac{b}{a}$, $\frac{p_k'}{q_k'}=[c_0;c_1,c_2\cdots c_k]$, $(k\geq0)$. Then
$$p_{n_i}=mp_{2k+1}'+p_{2k}',$$
$$q_{n_i}=mq_{2k+1}'+q_{2k}',$$
$$ (a_{n_i},b_{n_i})=(q_{n_i}b-p_{n_i}a,q_{n_i+1}b-p_{n_i+1}a),$$
where $m=i-c_2-c_4-\cdots-c_{2k}$ for $i\in(c_2+c_4+\cdots +c_{2k},c_2+c_4+\cdots +c_{2k+2}]$, $k>0$; $m=i$ for $i\in[0,c_2]$.

\end{llll}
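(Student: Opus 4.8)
The plan is to translate the statement into the classical theory of one-sided best rational approximations of $\alpha:=\frac{b}{a}$. As established above, $a_n=q_nb-p_na=a(q_n\alpha-p_n)$, where $(p_n,q_n)_{n\ge0}$ lists, in increasing order of $p_n/q_n$, all coprime pairs with $q_n\ge0$ and $0<q_n\alpha-p_n\le\frac1a$, started by the convention $(p_0,q_0)=(-1,0)$ (so $q_0\alpha-p_0=1$ and $a_0=a$). Put $\delta_n:=q_n\alpha-p_n=a_n/a>0$; since $a>0$ is a constant, the subsequence $(a_{n_i})$ of record minima of $(a_n)$ coincides with the subsequence of record minima of $(\delta_n)$. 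First I would remove the window constraint from the discussion: if a coprime pair $(p,q)$ satisfies $0<q\alpha-p<\delta_n$ for some $n$, then a fortiori $q\alpha-p<\delta_0=1\le\frac1a$ (using $a\le1$), so $(p,q)$ already appears in the list $(p_n,q_n)$; and distinct pairs never give equal $\delta$'s (else $\alpha\in\mathbb{Q}$). Hence $(p_n,q_n)$ is a record minimum of $(\delta_n)$ if and only if $q_n\alpha-p_n<q'\alpha-p'$ for every coprime pair $(p',q')$ with $q'\ge0$, $q'\alpha-p'>0$, $p'/q'<p_n/q_n$; that is, the record-minimum subsequence is precisely the increasing list of best rational approximations of $\alpha$ \emph{from below}.

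Next I would identify this list through the continued fraction $[c_0;c_1,c_2,\dots]$ of $\alpha$ and its convergents $p'_k/q'_k$, using $p'_k=c_kp'_{k-1}+p'_{k-2}$, $q'_k=c_kq'_{k-1}+q'_{k-2}$, the identity $p'_kq'_{k-1}-p'_{k-1}q'_k=(-1)^{k-1}$, and the resulting sign pattern $\operatorname{sign}(q'_k\alpha-p'_k)=(-1)^k$. For the intermediate fractions
$$\frac{P_{j,m}}{Q_{j,m}}:=\frac{mp'_{2j+1}+p'_{2j}}{mq'_{2j+1}+q'_{2j}}\qquad(j\ge0,\ 0\le m\le c_{2j+2})$$
one checks: (i) $Q_{j,m}\alpha-P_{j,m}=m(q'_{2j+1}\alpha-p'_{2j+1})+(q'_{2j}\alpha-p'_{2j})$ is linear in $m$ and equals $q'_{2j}\alpha-p'_{2j}>0$ at $m=0$ and $q'_{2j+2}\alpha-p'_{2j+2}>0$ at $m=c_{2j+2}$, so every $P_{j,m}/Q_{j,m}<\alpha$; (ii) for fixed $j$, as $m$ increases the value $P_{j,m}/Q_{j,m}$ strictly increases (consecutive ones are unimodular, $P_{j,m+1}Q_{j,m}-P_{j,m}Q_{j,m+1}=1$) while $Q_{j,m}\alpha-P_{j,m}$ strictly decreases (its $m$-coefficient $q'_{2j+1}\alpha-p'_{2j+1}$ is negative); (iii) the family for $j$ glues to that for $j+1$, since $m=c_{2j+2}$ gives $(p'_{2j+2},q'_{2j+2})$, the $m=0$ term for $j+1$. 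Consequently, listing the $P_{j,m}/Q_{j,m}$ in increasing order of value, their values run through all reduced fractions of this shape, strictly increasing towards $\alpha$, while the corresponding $\delta$-values strictly decrease throughout.

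To prove this list is exactly the set of record minima, I would use a cone/unimodularity argument. Let $(p,q)$ be a coprime pair with $q\ge1$, $p/q<\alpha$, not among the $(P_{j,m},Q_{j,m})$. If $p/q<c_0$, then $q\alpha-p\ge q(\alpha-c_0)+1>1=\delta_0$, so $(p,q)$ is dominated by the convention pair and is not a record minimum; otherwise $p/q$ lies strictly between two consecutive members $\frac{P}{Q}<\frac{P^*}{Q^*}$, which satisfy $P^*Q-PQ^*=1$, so $\{(Q,P),(Q^*,P^*)\}$ is a $\mathbb{Z}$-basis and $(q,p)=s(Q,P)+t(Q^*,P^*)$ with integers $s,t\ge1$ (positivity because $Q,Q^*>0$ and $p/q$ is strictly between $P/Q$ and $P^*/Q^*$), whence $q\alpha-p=s(Q\alpha-P)+t(Q^*\alpha-P^*)>Q\alpha-P$ while $\frac PQ<\frac pq$, so again $(p,q)$ is not a record minimum. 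Conversely each $(P_{j,m},Q_{j,m})$ is a record minimum: a preceding coprime pair is either an earlier intermediate fraction (strictly larger $\delta$ by (ii)--(iii)) or lies strictly between two consecutive intermediate fractions of value $\le P_{j,m}/Q_{j,m}$, where the same cone estimate makes its $\delta$ exceed that of the larger of the two, which is $\ge\delta(P_{j,m}/Q_{j,m})$; and pairs with value $<c_0$ have $\delta>\delta_0$. Two final checks: the list is infinite because the orbit is non-periodic ($\alpha\notin\mathbb{Q}$); and its first member $(p'_0,q'_0)=(c_0,1)$ really occurs in the forward orbit at some index $n_0>0$, since $0<q'_0\alpha-p'_0=\{\alpha\}\le\frac1a$ (using $b\le1$) and the convention pair $(p_0,q_0)=(-1,0)$ precedes it.

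It then remains to match the index bookkeeping: the block $j=0$ contributes $m=0,1,\dots,c_2$, i.e.\ $c_2+1$ terms starting with $(p'_0,q'_0)$, covering $i\in[0,c_2]$ with $k=0$, $m=i$; and for each $k\ge1$ the block $j=k$ contributes the \emph{new} terms $m=1,\dots,c_{2k+2}$ (the $m=0$ term repeating the last term of block $k-1$), covering $i\in(c_2+\cdots+c_{2k},\,c_2+\cdots+c_{2k+2}]$ with $m=i-(c_2+\cdots+c_{2k})$, exactly as stated. Finally $a_{n_i}=q_{n_i}b-p_{n_i}a$ is the general identity, and $b_{n_i}=a_{n_i+1}=q_{n_i+1}b-p_{n_i+1}a$ because $T$ maps $(a,b)$ to a pair whose first coordinate is $b$, so the second coordinate of $T^{\,n}(a,b)$ is the first coordinate of $T^{\,n+1}(a,b)$; together these give the asserted expression for $(a_{n_i},b_{n_i})$. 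The continued-fraction sign and monotonicity computations and the combinatorial count are routine; the step needing the most care, and the conceptual core, is establishing that the record minima are exactly the intermediate fractions, i.e.\ the cone/unimodularity estimate of the third paragraph.
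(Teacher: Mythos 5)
Your proposal is correct and takes essentially the same route as the paper: identify the successive depth records of the forward orbit with the best rational approximations of $\frac{b}{a}$ from below, identify those in turn with the intermediate fractions $\frac{mp_{2k+1}'+p_{2k}'}{mq_{2k+1}'+q_{2k}'}$, and then sort and match indices. The only difference is one of detail, not of method: you supply proofs (window removal, the unimodular cone estimate, the sign/monotonicity checks) of the classical continued-fraction facts that the paper invokes as known.
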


\begin{proof}[proof]
For a positive primitive pair $(\tilde{q},\tilde{p})$ such that $\frac{\tilde{p}}{\tilde{q}}<\frac{b}{a}$, the fact that it corresponds to some $(q_{n_i},p_{n_i})$ is equivalent to for any $\frac{p}{q}<\frac{\tilde{p}}{\tilde{q}}$, we have $\tilde{q}b-\tilde{p}a<qb-pa$, which in other words, means there is no primitive point in the triangle delimited by $y=0$, $x=\tilde{q}b-\tilde{p}a$ and $y=\frac{\frac{\tilde{q}}{a}}{\tilde{q}b-\tilde{p}a}x$ (sides included, vertices excluded). According to the basic property of continued fraction, this is equivalent to $\frac{\tilde{p}}{\tilde{q}}$ being a best approximation of $\frac{b}{a}$. However, we know that all the left-side best approximation of $\frac{b}{a}$ are $\frac{mp_{2k+1}'+p_{2k}'}{mq_{2k+1}'+q_{2k}'}$, $(k\geq0, 0\leq m\leq c_{2k}-1)$, so we have
$$\left\{a_{n_i}|i\geq0\right\}=\left\{\left(mq_{2k+1}'+q_{2k}'\right)b-\left(mp_{2k+1}'+p_{2k}'\right)a|k\geq0, 0\leq m\leq c_{2k}-1\right\}.$$
We sort $(mq_{2k+1}'+q_{2k}')b-(mp_{2k+1}'+p_{2k}')a$ by the order of size from large to small, which is also the order of size of $mq_{2k+1}'+q_{2k}'$ from small to large. Considering that $a_{n_i}$ is already sorted by the order of size from large to small, we can determine the matching between $a_{n_i}$ and $(mq_{2k+1}'+q_{2k}')b-(mp_{2k+1}'+p_{2k}')a$, which is the statement we give in the lemma.

\end{proof}

Since $q_{2k}'b-p_{2k}'a\rightarrow0$ as $k\rightarrow+\infty$, it follows that $a_{n_i}\rightarrow0$ as well. This explains why the orbit of the irrational point is non-periodic and unbounded. Let $a_k'=|q_k'b-p_k'a|$, we know that $a_k'=q_k'b-p_k'a>0$ when $k$ is even, and $a_k'=-(q_k'b-p_k'a)>0$ when $k$ is odd. Furthermore, define $c_k'=c_2+c_4+\cdots+c_{2k}$ and let $n_{c_k'}=n_k'$ for notational convenience. Then according to the notation in Lemma \ref{882}, we have $q_{2k}'=q_{n_k'}$ and $p_{2k}'=p_{n_k'}$.\\

Next, we provide estimations for $n_i-n_{k-1}'$ and $n_k'$.

Before proceeding, it is necessary to clarify some notation. $a_n\sim b_n$ indicates that $\displaystyle\lim_{n\rightarrow\infty}\frac{a_n}{b_n}=1$; $a_n\asymp b_n$ implies that there exists $0<A<B$ such that $A<\frac{a_n}{b_n}<B$.

\begin{llll}   \label{9932}
For $i\in(c_{k-1}',c_k']$, let $s=i-c_{k-1}'$, we have
$$n_i-n_{k-1}'\sim \frac{3}{\pi^2}\frac{1}{a_{2k-1}'}\left(\frac{1}{a_{2k-2}'-sa_{2k-1}'}-\frac{1}{a_{2k-2}'}\right), $$
$$n_k'\asymp \frac{1}{a_{2k-1}'a_{2k}'}.$$

\end{llll}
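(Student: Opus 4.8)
The plan is to view the stretch of orbit from index $n'_{k-1}$ to $n_i$ as a concatenation of $s:=i-c'_{k-1}$ consecutive excursions, apply the length estimate Lemma \ref{65} to each one, and add. The structural point I would use is that the subsequence $(n_i)$ is exactly the list of left-to-right minima of $(a_m)_{m\ge 0}$: for $n_{i-1}<m<n_i$ one has $a_m>a_{n_{i-1}}>a_{n_i}$, hence $\tfrac1{a_m}<\tfrac1{a_{n_{i-1}}},\tfrac1{a_{n_i}}$, so by Definition \ref{74} the orbit segment from $(a_{n_{i-1}},b_{n_{i-1}})$ to $(a_{n_i},b_{n_i})$ is an excursion. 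For $i$ in the batch $(c'_{k-1},c'_k]$, Lemma \ref{882} together with the sign convention $q'_{2k-2}b-p'_{2k-2}a=a'_{2k-2}$, $q'_{2k-1}b-p'_{2k-1}a=-a'_{2k-1}$ gives $a_{n_i}=a'_{2k-2}-(i-c'_{k-1})\,a'_{2k-1}$; in particular $a_{n'_{k-1}}=a'_{2k-2}$. Writing $x_t:=a'_{2k-2}-t\,a'_{2k-1}$ for $0\le t\le s$ (so $x_0=a'_{2k-2}$, $x_s=a_{n_i}$, $x_{t-1}-x_t=a'_{2k-1}$), the $t$-th of these excursions runs from the $a$-coordinate $x_{t-1}$ to $x_t$.

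Applying Lemma \ref{65} to the $t$-th excursion gives a length $\tfrac3{\pi^2}\tfrac1{x_{t-1}x_t}+O\!\big(\tfrac1{x_t}\log\tfrac1{x_{t-1}}\big)$, and $n_i-n'_{k-1}$ is the sum of these for $t=1,\dots,s$. Since $\tfrac1{x_{t-1}x_t}=\tfrac1{a'_{2k-1}}\big(\tfrac1{x_t}-\tfrac1{x_{t-1}}\big)$, the main terms telescope to $\tfrac3{\pi^2}\tfrac1{a'_{2k-1}}\big(\tfrac1{a'_{2k-2}-s\,a'_{2k-1}}-\tfrac1{a'_{2k-2}}\big)=:M$, which is exactly the asserted right-hand side (and $M=\tfrac3{\pi^2}\tfrac s{x_0x_s}$ after simplification). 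It remains to show that the accumulated error $E:=\sum_{t=1}^s O\!\big(\tfrac1{x_t}\log\tfrac1{x_{t-1}}\big)$ satisfies $E=o(M)$.

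This is where the real work lies, and the main obstacle is that $c_{2k}$ may be arbitrarily large, so the batch contains arbitrarily many sub-excursions and a crude bound of the form ``(number of sub-excursions)$\,\times\,$(worst single error)'' is hopelessly lossy. I would instead split $\log\tfrac1{x_{t-1}}=\log\tfrac1{a'_{2k-2}}+\log\tfrac{a'_{2k-2}}{x_{t-1}}$, writing $E=E_1+E_2$. For $E_1$, the trivial bound $\sum_{t=1}^s\tfrac1{x_t}\le\tfrac s{x_s}$ together with the identity $M=\tfrac3{\pi^2}\tfrac s{x_0x_s}$ gives $E_1/M=O\!\big(a'_{2k-2}\log\tfrac1{a'_{2k-2}}\big)$. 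For $E_2$, using $\log\tfrac{a'_{2k-2}}{x_{t-1}}=\log\!\big(1+\tfrac{(t-1)a'_{2k-1}}{x_{t-1}}\big)\le\tfrac{(t-1)a'_{2k-1}}{x_{t-1}}$ one gets $E_2=O\!\big(a'_{2k-1}\sum_{t=1}^s\tfrac{t-1}{x_{t-1}x_t}\big)$; then $\tfrac1{x_{t-1}x_t}=\tfrac1{a'_{2k-1}}\big(\tfrac1{x_t}-\tfrac1{x_{t-1}}\big)$ and an Abel summation reduce $\sum_{t=1}^s\tfrac{t-1}{x_{t-1}x_t}$ to $\tfrac1{a'_{2k-1}}\big(\tfrac{s-1}{x_s}-\sum_{t=1}^{s-1}\tfrac1{x_t}\big)\le\tfrac1{a'_{2k-1}}\tfrac s{x_s}$, whence $E_2/M=O(a'_{2k-2})$. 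Since $(a'_m)$ decreases to $0$ and $x\log\tfrac1x\to0$ as $x\to0^+$, both ratios tend to $0$ as $k\to\infty$, with constants independent of $s$; as $i\to\infty$ forces $k\to\infty$, this is precisely $E=o(M)$ and proves the first asymptotic.

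For the second assertion I would specialize to $i=c'_k$, so $s=c_{2k}$ and $x_s=a'_{2k}$. Using the continued-fraction recursion $a'_{2k-2}=c_{2k}a'_{2k-1}+a'_{2k}$ and the monotonicity $0<a'_{2k}<a'_{2k-1}<a'_{2k-2}$ — which yields $c_{2k}a'_{2k-1}<a'_{2k-2}<2c_{2k}a'_{2k-1}$, i.e. $a'_{2k-2}\asymp c_{2k}a'_{2k-1}$ — one checks that $M=\tfrac3{\pi^2}\tfrac1{a'_{2k-1}}\big(\tfrac1{a'_{2k}}-\tfrac1{a'_{2k-2}}\big)=\tfrac3{\pi^2}\tfrac{c_{2k}}{a'_{2k}a'_{2k-2}}\asymp\tfrac1{a'_{2k-1}a'_{2k}}$, so by the first part $n'_k-n'_{k-1}\asymp\tfrac1{a'_{2k-1}a'_{2k}}$. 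Finally I would induct on $k$: by the inductive hypothesis $n'_{k-1}\asymp\tfrac1{a'_{2k-3}a'_{2k-2}}\le\tfrac1{a'_{2k-1}a'_{2k}}$ (again by monotonicity of $(a'_m)$), and hence $n'_k=n'_{k-1}+(n'_k-n'_{k-1})\asymp\tfrac1{a'_{2k-1}a'_{2k}}$, the base case $k=1$ being the first part applied with $i=c'_1$.
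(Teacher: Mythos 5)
For the first asymptotic your route is essentially the paper's: break the stretch from $n'_{k-1}$ to $n_i$ into the $s$ consecutive excursions between successive new minima, apply Lemma \ref{65} to each, and telescope using $a_{n_{c'_{k-1}+t}}=a'_{2k-2}-t\,a'_{2k-1}$. In fact your treatment is more careful than the paper's: the paper simply sums the relations $n_m-n_{m-1}\sim\frac{3}{\pi^2}\frac{1}{a_{n_m}a_{n_{m-1}}}$ over a batch whose length $c_{2k}$ may be unbounded, while your splitting of the accumulated error into $E_1$ and $E_2$, with $E_1/M=O\big(a'_{2k-2}\log\frac{1}{a'_{2k-2}}\big)$ and $E_2/M=O(a'_{2k-2})$ uniformly in $s$, supplies exactly the uniformity that this summation needs. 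That part is correct.

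The second assertion is where your argument has a gap. The paper does not induct: it takes the backward-orbit point $M_k$ whose $x$-coordinate is $a'_{2k-1}$, observes that the segment from $M_k$ to $(a_{n'_k},b_{n'_k})$ is a single excursion of length $m_k\sim\frac{3}{\pi^2}\frac{1}{a'_{2k-1}a'_{2k}}$, and sandwiches $n'_k-n'_{k-1}<n'_k<m_k$, the left side being $\gtrsim\frac{1}{a'_{2k-1}a'_{2k}}$ because $a'_{2k-2}>2a'_{2k}$. Your induction, as stated, does not close: $\asymp$ demands constants uniform in $k$, but from $n'_{k-1}\le B_{k-1}/(a'_{2k-3}a'_{2k-2})$ and the mere monotonicity bound $1/(a'_{2k-3}a'_{2k-2})\le 1/(a'_{2k-1}a'_{2k})$ you only obtain $n'_k\le (B_{k-1}+C)/(a'_{2k-1}a'_{2k})$, so the upper constant grows by an additive amount at each step and tends to infinity; the statement being inducted on is not preserved. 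The repair is easy: since $a'_{m}=c_{m+2}a'_{m+1}+a'_{m+2}>2a'_{m+2}$, one has $1/(a'_{2j-1}a'_{2j})\le 4^{-(k-j)}/(a'_{2k-1}a'_{2k})$, so summing the increments $n'_j-n'_{j-1}$ over $j\le k$ (each $\lesssim 1/(a'_{2j-1}a'_{2j})$ with a uniform constant, by your first part) gives $n'_k\lesssim 1/(a'_{2k-1}a'_{2k})$ uniformly, which together with your lower bound yields the claim. So your alternative to the paper's backward-orbit comparison is salvageable, but the geometric decay must be invoked; monotonicity of $(a'_m)$ alone is not enough.
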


\begin{proof}
In the first step, we give an estimate of $n_i-n_{k-1}'$.

By Lemma \ref{882}, we have
$$a_{n_{c_{k-1}'+s}}=a_{2k-2}'-sa_{2k-1}'$$
for $s\in[0,c_{2k}]$. 

Since it is an excursion from $(a_{n_{m-1}},b_{n_{m-1}})$ to $(a_{n_m},b_{n_m})$, by Lemma \ref{65}, we have
$$n_m-n_{m-1}\sim \frac{3}{\pi^2}\cdot\frac{1}{a_{n_m}a_{n_{m-1}}},$$
thus
\begin{align}
n_i-n_{k-1}'&= n_i-n_{c_{k-1}'}\nonumber\\   \nonumber
&=\sum_{m=1}^{s}(n_{c_{k-1}'+m}-n_{c_{k-1}'+m-1})\\ \nonumber
&\sim\sum_{m=1}^{s}\frac{3}{\pi^2}\cdot\frac{1}{a_{n_{c_{k-1}'+m}}a_{n_{c_{k-1}'+m-1}}}\\ \nonumber
&=\frac{3}{\pi^2}\sum_{m=1}^{s}\frac{1}{(a_{2k-2}'-ma_{2k-1}')(a_{2k-2}'-(m-1)a_{2k-1}')}\\ \nonumber
&=\frac{3}{\pi^2}\sum_{m=1}^{s}\frac{1}{a_{2k-1}'}\left(\frac{1}{a_{2k-2}'-ma_{2k-1}'}-\frac{1}{a_{2k-2}'-(m-1)a_{2k-1}'}\right)\\
&=\frac{3}{\pi^2}\frac{1}{a_{2k-1}'}\left(\frac{1}{a_{2k-2}'-sa_{2k-1}'}-\frac{1}{a_{2k-2}'}\right).            \label{9914}
\end{align}

When $i=c_k'$, $s=c_{2k}$, $a_{2k-2}'-sa_{2k-1}'=a_{2k}'$, we have
$$n_k'-n_{k-1}'\sim \frac{3}{\pi^2}\frac{1}{a_{2k-1}'}\left(\frac{1}{a_{2k}'}-\frac{1}{a_{2k-2}'}\right).$$

In the second step, we estimate $n_{k}'$.

If we select all the points in the backward orbit that are deeper than the subsequent points in the backward orbit, then by symmetry,  we can determine that $a_{2k-1}'=p_{2k-1}a-q_{2k-1}b$  is the x-coordinate of the $c_1+c_3+\cdots+c_{2k-1}$-th point: $M_k$. Let $T^{m_k}(M_k)=(a_{n_k'},b_{n_k'})$. Since $M_k$ and $(a_{n_k'},b_{n_k'})$ are deeper than any other points in between, it is an excursion from $M_k$ to $(a_{n_k'},b_{n_k'})$. Therefore, by Lemma \ref{65}, we have
$$m_k\sim \frac{3}{\pi^2}\frac{1}{a_{2k-1}'a_{2k}'}.$$

Two obvious relations are $n_k'<m_k$ and $n_k'>n_k'-n_{k-1}'$. Since $a_{2k-2}'\geq a_{2k}'+a_{2k-1}'>2a_{2k}'$, by (\ref{9914}), we have

\begin{align*}
\frac{3}{\pi^2}\frac{1}{2a_{2k-1}'a_{2k}'}&<\frac{3}{\pi^2}\frac{1}{a_{2k-1}'}\left(\frac{1}{a_{2k}'}-\frac{1}{a_{2k-2}'}\right)\\
&\sim n_k'-n_{k-1}'\\
&<n_k'\\
&<m_k\\
&\sim \frac{3}{\pi^2}\frac{1}{a_{2k-1}'a_{2k}'},
\end{align*}
thus we conclude that
$$n_k'\asymp \frac{1}{a_{2k-1}'a_{2k}'}.$$\\

\end{proof}

\section{Proof of Theorem \ref{8817}}
In \S3.1, we introduce the $h$ function and its property. In \S3.2, we calculate the log speed at $n=n_{c_k'}$, In \S3.3, we calculate the log speed at $n=n_i,i\neq c_k'$. In \S3.4, we introduce the overall monotonicity of the $h$ function at an excursion. In \S3.5, we calculate the log speed for $n\in(n_{i-1},n_i),i\in[c_k'+1,c_{k+1}'-1]$. In \S3.6, we calculate the log speed for $n\in(n_{c_k'-1},n_{c_k'})$. In \S3.7, we summarize the results. In \S3.8, we prove Corollary \ref{9898}.

\subsection{h function}
We define a function $h$ on the space of positive number sequences with length greater than or equal to $3$:
$$h(s_1,s_2,\cdots,s_n)=\sum_{i=2}^{n-1}(\frac{s_{i-1}+s_{i+1}}{s_i}-3).$$
By (\ref{484848}), we have $f_n=h(a_0,a_1,\cdots,a_{n+1})=h(a_{-1},a_0,a_1,\cdots,a_{n+1})-(k_0-3)$.

\begin{llll}  \label{97743}
For a positive number sequence $(s_1,s_2,\cdots,s_n)$ with $n\geq5$, if $s_i\mid s_{i-1}+s_{i+1}$ for all $i\in[2,n-1]$, and there exists a largest term $s_r$ (i.e., $s_r\geq s_i$ for all $i\in[1,n]$) where $r\in[3,n-2]$, then eliminating $s_r$ from the sequence does not change the value of $h$.

\end{llll}
\begin{proof}
Since $s_r\mid s_{r-1}+s_{r+1}$, we have $\frac{s_{r-1}+s_{r+1}}{s_r}=1$. When $s_r$ is eliminated from the sequence, we have $\frac{s_{r-2}+s_{r+1}}{s_{r-1}}=\frac{s_{r-2}+s_r}{s_{r-1}}-1$ and $\frac{s_{r-1}+s_{r+2}}{s_{r+1}}=\frac{s_r+s_{r+2}}{s_{r+1}}-1$, Therefore,
$$h\left(s_1,s_2,\cdots,s_{r-1},s_{r+1},\cdots,s_n\right)=h\left(s_1,s_2,\cdots,s_n\right).$$\\
\end{proof}

\subsection{$n=n_{c_k'}$}

Firstly, we calculate the log speed at $n_{c_k'}$.

\subsubsection{Estimation of $f_{n_{c_k'}}$}

To estimate $f_{n_{c_k'}}$, we consider the sequence $\left(a_{-1},a_0,a_1,\cdots,a_{n_{c_k'}},a_{n_{c_k'}+1}\right)$ and apply Lemma \ref{97743} to eliminate terms while preserving the value of the function $h$. We begin by choosing following intervals from the sequence: $[a_{n_i},a_{n_{i+1}}]$ for $0\leq i\leq c_k'-1$, and $[a_0,a_{n_0}]$. In each interval, based on the definition of $a_{n_i}$, any term that is not an endpoint greater than both endpoint term. We identify a largest term and eliminate it from the sequence. According to Lemma \ref{97743}, this elimination does not change the value of $h$.

We repeat this process for each interval, eliminating the largest term in each. After eliminating all terms in these intervals, we obtain the reduced sequence $\left\{a_{-1},a_0,a_{n_0},a_{n_1},a_{n_2},\cdots,a_{n_{c_k'}}\right\}$. Based on our analysis, we can conclude that
$$h\left(a_{-1},a_0,a_{n_0},a_{n_1},a_{n_2},\cdots,a_{n_{c_k'}},a_{n_{c_k'}+1}\right)=h\left(a_{-1},a_0,a_1,\cdots,a_{n_{c_k'}+1}\right).$$

Since
$$\frac{a_{-1}+a_{n_1}}{a_0}=\frac{k_0a-b+b-c_0a}{a}=k_0-c_0,$$
$$\frac{a_0+a_{n_1}}{a_{n_0}}=\frac{a+(q_0+q_1)b-(p_0+p_1)a}{q_0b-p_0a}=\frac{a+(1+c_1)b-(c_0+c_0c_1+1)a}{b-c_0a}=1+c_1,$$
and for $i\in[c_j'+1,c_{j+1}'-1]$ with $j\in[1,k-1]$ and $i\in[1,c_1'-1]$,
$$\frac{a_{n_{i-1}}+a_{n_{i+1}}}{a_{n_i}}=2$$
we have the following:
\begin{align*}
\frac{a_{n_{c_j'-1}}+a_{n_{c_j'+1}}}{a_{n_{c_j'}}}&=\frac{(q_{2j}'-q_{2j-1}')b-(p_{2j}'-p_{2j-1}')a+(q_{2j}'+q_{2j+1}')b-(p_{2j}'+p_{2j+1}')a}{q_{2j}'b-p_{2j}'a}\\
&=\frac{(q_{2j+1}'-q_{2j-1}')b-(p_{2j+1}'-p_{2j-1}')a}{q_{2j}'b-p_{2j}'a}+2\\
&=\frac{c_{2j+1}q_{2j}'b-c_{2j+1}p_{2j}'a}{q_{2j}'b-p_{2j}'a}+2\\
&=c_{2j+1}+2
\end{align*}
for $j\in[1,k-1]$. Consequently,
\begin{align*}
f_{n_{c_k'}}=&h(a_{-1},a_0,a_1,\cdots,a_{n_{c_k'}+1})-(k_0-3)\\
=&h\left(a_{-1},a_0,a_{n_0},a_{n_1},a_{n_2},\cdots,a_{n_{c_k'}},a_{n_{c_k'}+1}\right)-(k_0-3)\\
=&\sum_{i=0}^{n_{c_k'}}\left(\frac{a_{i-1}+a_{i+1}}{a_i}-3\right)-(k_0-3)\\
=&\left(\frac{a_{-1}+a_{n_1}}{a_0}-3\right)+\left(\frac{a_0+a_{n_1}}{a_{n_0}}-3\right)+\sum_{j=1}^{k-1}\left(\frac{a_{n_{c_j'-1}}+a_{n_{c_j'+1}}}{a_{n_{c_j'}}}-3\right)\\
&+\left(\frac{a_{n_{c_k'-1}}+a_{n_{c_k'}+1}}{a_{n_{c_k'}}}-3\right)+\sum_{i=1}^{c_1'-1}\left(\frac{a_{n_{i-1}}+a_{n_{i+1}}}{a_{n_i}}-3\right)\\
&+\sum_{j=1}^{k-1}\sum_{i=c_j'+1}^{c_{j+1}'-1}\left(\frac{a_{n_{i-1}}+a_{n_{i+1}}}{a_{n_i}}-3\right)-(k_0-3)\\
=&(k_0-c_0-3)+(c_1-2)+\sum_{j=1}^{k-1}(c_{2j+1}-1)\\
&+\left(\frac{a_{n_{c_k'-1}}+a_{n_{c_k'}+1}}{a_{n_{c_k'}}}-3\right)-\sum_{i=1}^{c_1'-1}1-\sum_{j=1}^{k-1}\sum_{i=c_j'+1}^{c_{j+1}'-1}1-(k_0-3)\\
=&\sum_{j=0}^{k-1}c_{2j+1}-\sum_{j=0}^kc_{2j}-1+\left(\frac{a_{n_{c_k'-1}}+a_{n_{c_k'}+1}}{a_{n_{c_k'}}}-3\right).
\end{align*}

With regard to $\frac{a_{n_{c_k'-1}}+a_{n_{c_k'}+1}}{a_{n_{c_k'}}}$, we first calculate $\frac{a_{n_{c_k'-1}}}{a_{n_{c_k'}}}$ as following

$$\frac{a_{n_{c_k'-1}}}{a_{n_{c_k'}}}=\frac{a_{n_{c_k'-1}}+a_{n_{c_k'+1}}}{a_{n_{c_k'}}}-\frac{a_{n_{c_k'+1}}}{a_{n_{c_k'}}}=c_{2k+1}+2-\frac{a_{n_{c_k'+1}}}{a_{n_{c_k'}}},$$
where $\frac{a_{n_{c_k'+1}}}{a_{n_{c_k'}}}\in(0,1)$.

Next, we estimate the term $\frac{a_{n_{c_k'}+1}}{a_{n_{c_k'}}}$ using the following inequality:
$$\frac{1}{a_{n_{c_k'}}}-1=\frac{1-a_{n_{c_k'}}}{a_{n_{c_k'}}}<\frac{a_{n_{c_k'}+1}}{a_{n_{c_k'}}}\leq\frac{1}{a_{n_{c_k'}}}.$$

By combining these estimates, we can derive bounds for $f_{n_{c_k'}}$:
$$\sum_{j=0}^k(c_{2j+1}-c_{2j})+\frac{1}{a_{n_{c_k'}}}-4<f_{n_{c_k'}}<\sum_{j=0}^k(c_{2j+1}-c_{2j})+\frac{1}{a_{n_{c_k'}}}-2.$$

Then, we need to estimate $\sum_{j=0}^k(c_{2j+1}-c_{2j})+\frac{1}{a_{n_{c_k'}}}$. We begin by expressing $a_{n_{c_k'}}$ as follows:
$$a_{n_{c_k'}}=q_{n_{c_k'}}b-p_{n_{c_k'}}a=q_{2k}'b-p_{2k}'a=q_{2k}'a\left(\frac{b}{a}-\frac{p_{2k}'}{q_{2k}'}\right).$$
Using following known bounds for $\frac{b}{a}-\frac{p_{2k}'}{q_{2k}'}$:
$$\frac{1}{2q_{2k}'q_{2k+1}'}<\frac{1}{q_{2k}'(q_{2k+1}'+q_{2k}')}<\frac{b}{a}-\frac{p_{2k}'}{q_{2k}'}<\frac{1}{q_{2k}'q_{2k+1}'},$$
we have
$$\frac{q_{2k+1}'}{a}<\frac{1}{a_{n_{c_k'}}}<\frac{2q_{2k+1}'}{a}.$$
To estimate $q_{2k+1}'$, we observe that
$$q_{2k+1}'=c_{2k+1}q_{2k}'+q_{2k-1}'\geq c_{2k+1}q_{2k}'+1\geq c_{2k+1}c_{2k}q_{2k-1}'+c_{2k+1}+1$$
$$\geq c_{2k+1}c_{2k}q_{2k-1}'+2\geq\cdots\geq c_{2k+1}c_{2k}\cdots c_1+(2k+1).$$\\

To further simplify our estimation of $f_{n_{c_k'}}$, we need to consider the relative significance of $\sum\limits_{j=0}^k(c_{2j+1}-c_{2j})$ compared to $\frac{1}{a_{n_{c_k'}}}$. We will demonstrate that the sum becomes negligible as $k$ increases.

We begin with the following inequality:
$$\frac{\sum\limits_{j=0}^k(c_{2j+1}-c_{2j})}{1\big/ a_{n_{c_k'}}}<\frac{\sum\limits_{j=1}^k(c_{2j+1}-1)+c_1}{q_{2k+1}'\big/ a}\leq\frac{\sum\limits_{j=1}^k(c_{2j+1}-1)+c_1}{\prod\limits_{j=1}^{2k+1}c_j+(2k+1)}\leq\frac{\sum\limits_{j=1}^k(c_{2j+1}-1)+c_1}{\prod\limits_{j=1}^kc_{2j+1}+(2k+1)}.$$

If $\sum\limits_{j=1}^{+\infty}(c_{2j+1}-1)+c_1=M<+\infty$, then
$$\frac{\sum\limits_{j=0}^k(c_{2j+1}-c_{2j})}{1\big/ a_{n_{c_k'}}}<\frac{M}{2k+1}\rightarrow 0.$$

If $\sum\limits_{j=1}^{+\infty}(c_{2j+1}-1)+c_1=\infty$, then when $\sum\limits_{j=0}^{k}(c_{2j+1}-1)>3$ and $c_{2k+3}>1$,
\begin{align*}
\frac{\sum\limits_{j=1}^{k+1}(c_{2j+1}-1)}{\prod\limits_{j=1}^{k+1}c_{2j+1}}&=\frac{\sum\limits_{j=1}^k(c_{2j+1}-1)-1}{\prod\limits_{j=1}^{k+1}c_{2j+1}}+\frac{1}{\prod\limits_{j=1}^kc_{2j+1}}\\
&\leq\frac{\sum\limits_{j=1}^k(c_{2j+1}-1)-1}{2\prod\limits_{j=1}^kc_{2j+1}}+\frac{1}{\prod\limits_{j=1}^kc_{2j+1}}\\
&=\frac{\sum\limits_{j=1}^k(c_{2j+1}-1)+1}{2\prod\limits_{j=1}^kc_{2j+1}}\\
&<\frac{2}{3}\cdot\frac{\sum\limits_{j=1}^k(c_{2j+1}-1)}{\prod\limits_{j=1}^kc_{2j+1}}.
\end{align*}
Since there are infinite times that $c_{2k+3}>1$, we have
\begin{equation}\frac{\sum\limits_{j=0}^k(c_{2j+1}-c_{2j})}{1\big/ a_{n_{c_k'}}}<\frac{\sum\limits_{j=1}^k(c_{2j+1}-1)}{\prod\limits_{j=1}^kc_{2j+1}}+\frac{c_1}{\prod\limits_{j=1}^kc_{2j+1}}\rightarrow 0.\label{9876}\end{equation}

On the other hand, we have
$$-\frac{\sum\limits_{j=0}^k(c_{2j+1}-c_{2j})}{1\big/ a_{n_{c_k'}}}<\frac{\sum\limits_{j=0}^k(c_{2j}-1)}{q_{2k+1}'\big/ a}\leq\frac{\sum\limits_{j=0}^k(c_{2j}-1)}{\prod\limits_{j=1}^{2k+1}c_j+(2k+1)}\leq\frac{\sum\limits_{j=0}^k(c_{2j}-1)}{\prod\limits_{j=1}^kc_{2j}+(2k+1)}.$$

If $\sum\limits_{j=1}^{+\infty}(c_{2j}-1)=M<+\infty$, then
$$-\frac{\sum\limits_{j=0}^k(c_{2j+1}-c_{2j})}{1\big/ a_{n_{c_k'}}}<\frac{M}{2k+1}\rightarrow 0.$$

If $\sum\limits_{j=1}^{+\infty}(c_{2j}-1)=\infty$, then when $\sum\limits_{j=1}^{k}(c_{2j}-1)>4-c_0$ and $c_{2k+2}>1$
\begin{align*}
\frac{\sum\limits_{j=0}^{k+1}(c_{2j}-1)}{\prod\limits_{j=1}^{k+1}c_{2j}}&=\frac{\sum\limits_{j=1}^k(c_{2j}-1)+c_0-2}{\prod\limits_{j=1}^{k+1}c_{2j}}+\frac{1}{\prod\limits_{j=1}^kc_{2j}}\\
&\leq\frac{\sum\limits_{j=1}^k(c_{2j}-1)+c_0-2}{2\prod\limits_{j=1}^kc_{2j}}+\frac{1}{\prod\limits_{j=1}^kc_{2j}}\\
&=\frac{\sum\limits_{j=1}^k(c_{2j}-1)+c_0}{2\prod\limits_{j=1}^kc_{2j}}\\
&<\frac{2}{3}\cdot\frac{\sum\limits_{j=0}^k(c_{2j}-1)}{\prod\limits_{j=1}^kc_{2j}}.
\end{align*}
Since there are infinite times that $c_{2k+2}>1$, we have
\begin{equation}-\frac{\sum\limits_{j=0}^k(c_{2j+1}-c_{2j})}{1\big/ a_{n_{c_k'}}}<\frac{\sum\limits_{j=0}^k(c_{2j}-1)}{\prod\limits_{j=1}^kc_{2j}}\rightarrow 0.\label{9877}\end{equation}

Combing (\ref{9876}) and (\ref{9877}), we can conclude that
$$\frac{\sum\limits_{j=0}^k(c_{2j+1}-c_{2j})}{1\big/ a_{n_{c_k'}}}\rightarrow0.$$

\begin{rrr}   \label{771}
From the proof above we know that
$$\frac{\sum\limits_{j=0}^k(c_{2j+1}-c_{2j})}{1\big/ a_{2k}'}\rightarrow0.$$
Take $(\tilde{a},\tilde{b})=(b-c_0a,a)$ and apply the result above, since $\tilde{c_j}=c_{j+1}$ and $\tilde{a_{2k}'}=a_{2k+1}'$, we have
$$\frac{\sum\limits_{j=0}^k(c_{2j+2}-c_{2j+1})}{1\big/ a_{2k+1}'}\rightarrow0.$$\\

\end{rrr}

Then if we pick any constant $r_1\in(0,1)$ and $r_2\in(1,+\infty)$, then as $k\rightarrow+\infty$, we have
\begin{equation}
\frac{r_1}{a_{n_{c_k'}}}<f_{n_{c_k'}}<\frac{r_2}{a_{n_{c_k'}}},     \label{661}
\end{equation}

Therefore,
\begin{equation}
f_{n_{c_k'}}\asymp\frac{1}{a_{n_{c_k'}}}\asymp q_{2k+1}'   \label{662}
\end{equation}

\subsubsection{Estimation of $n_{c_k'}$}

Now, in order to estimate $\log_n|f_n|$ for $n=n_{c_k'}$, we only need to give the estimation of $n_{c_k'}$.

Let $a_k'=|q_k'b-p_k'a|$, we know that $a_k'=q_k'b-p_k'a>0$ when $k$ is even; $a_k'=-(q_k'b-p_k'a)>0$ when $k$ is odd; $a_{n_{c_k'}}=a_{2k}'$. By lemma \ref{9932}, we have
$$n_{c_k'}\asymp \frac{1}{a_{2k-1}'a_{2k}'}.$$
As we proved earlier, it is known that
$$\frac{1}{a_k'}\asymp q_{k+1}',$$
which implies that
\begin{equation}
n_{c_k'}\asymp q_{2k}'q_{2k+1}'.             \label{663}
\end{equation}

\subsubsection{$\limsup\limits_{k\rightarrow+\infty}\log_{n_{c_k'}}|f_{n_{c_k'}}|$}

Using (\ref{662}) and (\ref{663}), we obtain
$$\limsup_{k\rightarrow+\infty}\log_{n_{c_k'}}|f_{n_{c_k'}}|=\limsup_{k\rightarrow+\infty}\log_{n_{c_k'}}\frac{1}{a_{n_{c_k'}}}=\limsup_{k\rightarrow+\infty}\log_{q_{2k}'q_{2k+1}'}q_{2k+1}'$$
\begin{equation}
=\limsup_{k\rightarrow+\infty}\log_{(q_{2k+1}')^{\frac{1}{e_{2k+1}-1}}q_{2k+1}'}q_{2k+1}'=\frac{e^--1}{e^-}.    \label{664}
\end{equation}\\

\subsection{$n=n_i,i\neq c_k'$}

Next, we aim to calculate $\limsup\limits_{i\rightarrow+\infty}\log_{n_i}|f_{n_i}|$. Based on what we proved above, we know that when $i=c_k'$, the limit superior is $\frac{e^--1}{e^-}$. But we also need to obtain the result for $i\in(c_k',c_{k+1}')$, so we need to calculate $\limsup\limits_{i\rightarrow+\infty, i\neq c_k'}\log_{n_i}|f_{n_i}|$. The method of calculation is similar to what we did earlier, which is using the $h$ function and Lemma \ref{97743}.

One thing we need to mention is that if there exists a constant $K>0$ such that for any $k>K$, $c_{2k}=1$, $c_k'=c_{k-1}'+1$, this limit superior won't exist. In this case, $e^+=2$.\\

\subsubsection{Estimation of $f_{n_i},i\neq c_k'$}

For $i\in(c_k',c_{k+1}')$, first we need to estimate $f_{n_i}=h(a_0,a_1,\cdots,a_{n_i+1})$.
\begin{align*}
h(a_0,a_1,\cdots,a_{n_i+1})&=h(a_{-1},a_0,a_1,\cdots,a_{n_i+1})-(k_0-3)\\
&=h(a_{-1},a_0,a_{n_0},a_{n_1},a_{n_2},\cdots,a_{n_i},a_{n_i+1})-(k_0-3)\\
&=\sum_{j=0}^kc_{2j+1}-i-1+(\frac{a_{n_{i-1}}+a_{n_i+1}}{a_{n_i}}-3).
\end{align*}
Since
$$\frac{a_{n_{i-1}}}{a_{n_i}}=\frac{a_{n_{i-1}}+a_{n_{i+1}}}{a_{n_i}}-\frac{a_{n_{i+1}}}{a_{n_i}}=2-\frac{a_{n_{i+1}}}{a_{n_i}}\in(1,2),$$
$$\frac{1}{a_{n_i}}-1=\frac{1-a_{n_i}}{a_{n_i}}<\frac{a_{n_i+1}}{a_{n_i}}\leq\frac{1}{a_{n_i}},$$
we have following bound
$$\sum_{j=0}^kc_{2j+1}-i+\frac{1}{a_{n_i}}-4<f_{n_i}<\sum_{j=0}^kc_{2j+1}-i+\frac{1}{a_{n_i}}-2.$$\\
Let $i'=i-c_k'$, we have
$$\sum_{j=0}^k(c_{2j+1}-c_{2j})-i'+\frac{1}{a_{n_i}}-4<f_{n_i}<\sum_{j=0}^k(c_{2j+1}-c_{2j})-i'+\frac{1}{a_{n_i}}-2,$$
According to Remark \ref{771}, we know that
$$\left|\frac{\sum\limits_{j=0}^k(c_{2j+1}-c_{2j})}{1\big/ a_{n_i}}\right|\leq\left|\frac{\sum\limits_{j=0}^k(c_{2j+1}-c_{2j})}{1\big/ a_{n_{c_k'}}}\right|\rightarrow0.$$
Therefore, if we pick any constant $r_3\in(0,1)$ and $r_4\in(1,+\infty)$, as $i,k\rightarrow+\infty$, we obtain
\begin{equation}
\frac{r_3}{a_{n_i}}-i'<f_{n_i}<\frac{r_4}{a_{n_i}}-i'.        \label{666}
\end{equation}
Let $r_3=0.9$, $r_4=1.1$, we have
$$|f_{n_i}|<\max\left\{\frac{r_4}{a_{n_i}},i'\right\}.$$

\subsubsection{Estimation of $n_i,i\neq c_k'$}

Now, we need to give an estimation about $n_i$. By Lemma \ref{9932}, we know that
$$n_i-n_k'\sim \frac{3}{\pi^2}\frac{1}{a_{2k+1}'}\left(\frac{1}{a_{2k}'-i'a_{2k+1}'}-\frac{1}{a_{2k}'}\right)=\frac{3}{\pi^2}\frac{i'}{(a_{2k}'-i'a_{2k+1}')a_{2k}'}, $$
$$n_k'\asymp \frac{1}{a_{2k-1}'a_{2k}'}.$$
Since
\begin{align*}
n_{c_k'}&<m_k\\
&=\#\{(s,t)|s,t\in\mathbb{Z}^*,(s,t)=1,sa_{2k}'+ta_{2k-1}'\leq 1\}+1\\
&<\#\{(s,t)|s,t\in\mathbb{Z}^*,(s,t)=1,sa_{2k}'+t(a_{2k}'-i'a_{2k+1}')\leq 1\}+1\\
&=n_{c_k'+1}-n_{c_k}'\\
&\leq n_i-n_{c_k'},
\end{align*}
therefore,
$$n_i\in(n_i-n_{c_k'},2(n_i-n_{c_k'})),$$
$$n_i\asymp\frac{i'}{(a_{2k}'-i'a_{2k+1}')a_{2k}'}.$$

\subsubsection{$\limsup\limits_{i\rightarrow+\infty, i\neq c_k'}\log_{n_i}|f_{n_i}|$}

Now, we aim to calculate $\limsup\limits_{i\rightarrow+\infty, i\neq c_k'}\log_{n_i}|f_{n_i}|$.
\begin{align}
\limsup\limits_{i\rightarrow+\infty, i\neq c_k'}\log_{n_i}|f_{n_i}|&\leq\limsup\limits_{i\rightarrow+\infty, i\neq c_k'}\log_{n_i}\max\left\{\frac{r_4}{a_{n_i}},i'\right\}   \nonumber\\  \nonumber
&=\max\left\{\limsup\limits_{i\rightarrow+\infty, i\neq c_k'}\log_{n_i}\frac{r_4}{a_{n_i}},\limsup\limits_{i\rightarrow+\infty, i\neq c_k'}\log_{n_i}i'\right\}\\
&=\max\left\{\limsup\limits_{i\rightarrow+\infty, i\neq c_k'}\log_{n_i}\frac{1}{a_{n_i}},\limsup\limits_{i\rightarrow+\infty, i\neq c_k'}\log_{n_i}i'\right\}.    \label{737674}
\end{align}

For $\limsup\limits_{i\rightarrow+\infty, i\neq c_k'}\log_{n_i}\frac{1}{a_{n_i}}$, we have
$$\limsup\limits_{i\rightarrow+\infty, i\neq c_k'}\log_{n_i}\frac{1}{a_{n_i}}=\limsup\limits_{i\rightarrow+\infty, i\neq c_k'}\log_{\frac{i'}{(a_{2k}'-i'a_{2k+1}')a_{2k}'}}\frac{1}{a_{2k}'-i'a_{2k+1}'}.$$
Since $i<c_{k+1}'$, it follows $a_{2k}'-i'a_{2k+1}'>a_{2k+1}'$. From this, we can deduce
$$\frac{2i'^2}{2i'-1}<i'+1<\frac{a_{2k}'}{a_{2k+1}'},$$
which implies
$$\frac{a_{2k}'}{i'}<2(a_{2k}'-i'a_{2k+1}'),$$
and hence
$$\frac{i'}{(a_{2k}'-i'a_{2k+1}')a_{2k}'}>\frac{1}{2(a_{2k}'-i'a_{2k+1}')^2}.$$
Therefore,
$$\limsup\limits_{i\rightarrow+\infty, i\neq c_k'}\log_{n_i}\frac{1}{a_{n_i}}\leq\frac{1}{2}.$$

If we pick $i=c_k'+1$, $i'=1$, then
$$\frac{1}{(a_{2k}'-a_{2k+1}')a_{2k}'}<\frac{1}{(a_{2k}'-a_{2k+1}')^2},$$
so
$$\limsup\limits_{i\rightarrow+\infty, i\neq c_k'}\log_{n_i}\frac{1}{a_{n_i}}\geq\limsup\limits_{i\rightarrow+\infty, i=c_k'+1<c_{k+1}'}\log_{n_i}\frac{1}{a_{n_i}}\geq\frac{1}{2}.$$
Therefore,
\begin{equation}
\limsup\limits_{i\rightarrow+\infty, i\neq c_k'}\log_{n_i}\frac{1}{a_{n_i}}=\frac{1}{2}.   \label{737675}
\end{equation}\\

For $\limsup\limits_{i\rightarrow+\infty, i\neq c_k'}\log_{n_i}i'$, we have
\begin{align}
\limsup\limits_{i\rightarrow+\infty, i\neq c_k'}\log_{n_i}i'&=\limsup\limits_{i\rightarrow+\infty, i\neq c_k'}\log_{\frac{i'}{(a_{2k}'-i'a_{2k+1}')a_{2k}'}}i'  \nonumber\\   \nonumber
&=\frac{1}{\liminf\limits_{i\rightarrow+\infty, i\neq c_k'}\log_{i'}\frac{i'}{(a_{2k}'-i'a_{2k+1}')a_{2k}'}}\\
&=\frac{1}{\liminf\limits_{i\rightarrow+\infty, i\neq c_k'}\log_{i'}\frac{1}{(a_{2k}'-i'a_{2k+1}')a_{2k}'}+1},   \label{687653}
\end{align}
For $i'<c_{2k+2}$, we have $\frac{1}{(a_{2k}'-i'a_{2k+1}')a_{2k}'}>\frac{1}{a_{2k}'^2}$, thus
\begin{align*}
\liminf\limits_{i\rightarrow+\infty, i\neq c_k'}\log_{i'}\frac{1}{(a_{2k}'-i'a_{2k+1}')a_{2k}'}&\geq\liminf\limits_{k\rightarrow+\infty,  c_k'+1<c_{k+1}'}\log_{c_{2k+2}}\frac{1}{a_{2k}'^2}\\
&=\liminf\limits_{k\rightarrow+\infty,  c_k'+1<c_{k+1}'}\log_{c_{2k+2}}q_{2k+1}'^2\\
&=\liminf\limits_{k\rightarrow+\infty,  c_k'+1<c_{k+1}'}\log_{(q_{2k+1}')^{e_{2k+2}-2}}q_{2k+1}'^2\\
&=\frac{2}{e^+-2}.
\end{align*}
If we pick $i=c_k'+[\frac{c_{2k+2}}{2}]$, $i'=[\frac{c_{2k+2}}{2}]$, we have $i'>\frac{c_{2k+2}}{3}$ and
$$\frac{1}{(a_{2k}'-i'a_{2k+1}')a_{2k}'}<\frac{2}{a_{2k}'^2},$$
therefore,
\begin{align*}
\liminf\limits_{i\rightarrow+\infty, i\neq c_k'}\log_{i'}\frac{1}{(a_{2k}'-i'a_{2k+1}')a_{2k}'}&\leq\liminf\limits_{i\rightarrow+\infty\atop c_k'<i=c_k'+\left[\frac{c_{2k+2}}{2}\right]<c_{k+1}'}\log_{i'}\frac{1}{(a_{2k}'-i'a_{2k+1}')a_{2k}'}\\
&\leq\liminf\limits_{i\rightarrow+\infty\atop c_k'<i=c_k'+[\frac{c_{2k+2}}{2}]<c_{k+1}'}\log_{\frac{c_{2k+2}}{3}}\frac{2}{a_{2k}'^2}\\
&=\liminf\limits_{i\rightarrow+\infty\atop c_k'<i=c_k'+\left[\frac{c_{2k+2}}{2}\right]<c_{k+1}'}\log_{(q_{2k+1}')^{e_{2k+2}-2}}q_{2k+1}'^2\\
&=\frac{2}{e^+-2}.
\end{align*}

Thus, we can obtain
$$\liminf\limits_{i\rightarrow+\infty, i\neq c_k'}\log_{i'}\frac{1}{(a_{2k}'-i'a_{2k+1}')a_{2k}'}=\frac{2}{e^+-2}.$$
Then by (\ref{687653}), we have
\begin{equation}
\limsup\limits_{i\rightarrow+\infty, i\neq c_k'}\log_{n_i}i'=\frac{1}{\frac{2}{e^+-2}+1}=\frac{e^+-2}{e^+}. \label{737676}
\end{equation}

Combining (\ref{737674}), (\ref{737675}), and (\ref{737676}), we can conclude that
\begin{equation}\limsup\limits_{i\rightarrow+\infty, i\neq c_k'}\log_{n_i}|f_{n_i}|\leq\max\left\{\frac{1}{2},\frac{e^+-2}{e^+}\right\}.\label{7654}\end{equation}

If we pick $i=c_k'+1$, $i'=1$, by (\ref{666}), we have $f_{n_i}>\frac{r_3}{a_{n_i}}-1$, which means that
$$\limsup\limits_{i\rightarrow+\infty, i\neq c_k'}\log_{n_i}|f_{n_i}|\geq\limsup\limits_{i\rightarrow+\infty, i=c_k'+1<c_{k+1}'}\log_{n_i}\left(\frac{r_3}{a_{n_i}}-1\right),$$
\begin{equation}=\limsup\limits_{i\rightarrow+\infty, i=c_k'+1<c_{k+1}'}\log_{n_i}\frac{1}{a_{n_i}}\geq\frac{1}{2}.\label{7655}\end{equation}

When $\frac{e^+-2}{e^+}\leq\frac{1}{2}$, by (\ref{7654}) and (\ref{7655}), we can deduce that
\begin{equation}
\limsup\limits_{i\rightarrow+\infty, i\neq c_k'}\log_{n_i}|f_{n_i}|=\frac{1}{2}.  \label{387462}
\end{equation}

When $\frac{e^+-2}{e^+}>\frac{1}{2}$, $e^+>4$, there exists an increasing even number sequence $\{s_j\}_{j=1}^{+\infty}$, such that $e_{s_j}\rightarrow 4$ and $e_{s_j}>3$. If we pick $i=c_{\frac{s_j}{2}}'+\left[\frac{c_{s_j+2}}{2}\right]$, $i'=\left[\frac{c_{s_j+2}}{2}\right]$, then
$$\frac{1}{a_{n_i}}<\frac{2}{a_{s_j}}<\frac{4q_{s_j+1}'}{a}=o\left({q_{s_j+1}'}^{e_{s_j+2}-2}\right)=o(c_{s_j+2}),$$
thus
\begin{align*}
\limsup\limits_{i\rightarrow+\infty, i\neq c_k'}\log_{n_i}|f_{n_i}|&\geq\limsup\limits_{i\rightarrow+\infty\atop c_{\frac{s_j}{2}}'<i=c_{\frac{s_j}{2}}'+\left[\frac{c_{s_j+2}}{2}\right]<c_{{\frac{s_j}{2}}+1}'}\log_{n_i}|f_{n_i}|\\
&=\limsup\limits_{i\rightarrow+\infty\atop c_{\frac{s_j}{2}}'<i=c_{\frac{s_j}{2}}'+\left[\frac{c_{s_j+2}}{2}\right]<c_{{\frac{s_j}{2}}+1}'}\log_{n_i}i'\\
&=\frac{1}{\liminf\limits_{i\rightarrow+\infty\atop c_{\frac{s_j}{2}}'<i=c_{\frac{s_j}{2}}'+\left[\frac{c_{s_j+2}}{2}\right]<c_{{\frac{s_j}{2}}+1}'}\log_{i'}\frac{1}{(a_{s_j}'-i'a_{s_j+1}')a_{s_j}'}+1}\\
&\geq\frac{1}{\frac{2}{e^+-2}+1}\\
&=\frac{e^+-2}{e^+}.
\end{align*}
Therefore,
\begin{equation}
\limsup\limits_{i\rightarrow+\infty, i\neq c_k'}\log_{n_i}|f_{n_i}|=\frac{e^+-2}{e^+}.  \label{387463}
\end{equation}

In summary, by (\ref{387462}) and (\ref{387463}), we have
\begin{equation}
\limsup\limits_{i\rightarrow+\infty, i\neq c_k'}\log_{n_i}|f_{n_i}|=\max\left\{\frac{1}{2},\frac{e^+-2}{e^+}\right\}.    \label{665}
\end{equation}

Combining (\ref{664}) and (\ref{665}), we can conclude that
\begin{align}
\limsup\limits_{i\rightarrow+\infty}\log_{n_i}f_{n_i}&=\max\left\{\limsup_{k\rightarrow+\infty}\log_{n_{c_k'}}f_{n_{c_k'}},\limsup\limits_{i\rightarrow+\infty, i\neq c_k'}\log_{n_i}f_{n_i}\right\}\nonumber\\   \nonumber
&=\max\left\{\frac{e^--1}{e^-},\max\left\{\frac{1}{2},\frac{e^+-2}{e^+}\right\}\right\}\\
&=\max\left\{\frac{e^--1}{e^-},\frac{e^+-2}{e^+}\right\}.       \label{6612}
\end{align}

Then, in order to prove the main result, we only need to prove
$$\limsup_{n\rightarrow+\infty}\log_nf_n=\limsup\limits_{i\rightarrow+\infty}\log_{n_i}f_{n_i}.$$\\

\subsection{Overall monotonicity of $k$}

\begin{ttt}   \label{773}
For an excursion $(a_i,b_i)_{i=1}^m$ where $m\geq4$, we have
$$f_1>f_i>f_{m-1}$$
for $i\in[2,m-2]$.

\end{ttt}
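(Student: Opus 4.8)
The plan is to recast the statement via the function $h$ from Lemma~\ref{97743} and to argue by induction on the excursion length $m$. Since $f_i-f_{i-1}=k_i-3=\tfrac{a_{i-1}+a_{i+1}}{a_i}-3$, one has for $i\in[2,m-2]$
\[
f_i-f_1=h(a_1,a_2,\dots,a_{i+1}),\qquad f_{m-1}-f_i=h(a_i,a_{i+1},\dots,a_m),
\]
so that $f_1>f_i$ and $f_i>f_{m-1}$ are exactly $h(a_1,\dots,a_{i+1})<0$ and $h(a_i,\dots,a_m)<0$. The conditions defining an excursion---the two endpoints are strictly the shallowest, and every $\tfrac{a_{j-1}+a_{j+1}}{a_j}$ is a positive integer---and the function $h$ are all invariant under reversing a finite sequence; applying the first family of inequalities to the reversed excursion $(a_{m+1-i})_{i=1}^m$ yields the second. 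Hence it suffices to prove statement (A): \emph{for every excursion $(a_1,\dots,a_m)$ and every $p\in[3,m-1]$, $h(a_1,\dots,a_p)<0$}; since $h$ takes only integer values on such sequences, this means $h(a_1,\dots,a_p)\le-1$.

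Two preliminary facts feed the induction. First, $k_2\le2$ whenever $m\ge4$: by Remark~\ref{64}, $a_2=\bigl[\tfrac{1-a_m}{a_1}\bigr]a_1+a_m>1-a_1$, so $a_2>\tfrac12$ (since $a_1<a_2$), and then $a_3=k_2a_2-a_1\le1$ forces $k_2\le\tfrac{1+a_1}{a_2}<1+\tfrac1{a_2}<3$. Second, the deletion step: if $a_r$ is a largest interior term, then $\tfrac{a_{r-1}+a_{r+1}}{a_r}=1$ and its two neighbouring ratios are $\ge2$ (else $a_{r-1}$ or $a_{r+1}$ would exceed $a_r$), exactly as in the proof of Lemma~\ref{97743}; deleting $a_r$ again gives an excursion, of length $m-1$, and by Lemma~\ref{97743} the number $h(a_1,\dots,a_p)$ is unchanged for every prefix not ending at $a_r$ or at $a_{r+1}$, whereas for a prefix ending at $a_r$ or at $a_{r+1}$ (and for the full sequence when $r\in\{2,m-1\}$) a one-line computation with $a_{r-1}+a_{r+1}=a_r$ shows $h$ changes by exactly $\pm1$. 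Along the way one also gets the quantitative companion (Lemma~B): $h(a_1,\dots,a_m)=-2$ for $m=3$ and $h(a_1,\dots,a_m)\le-3$ for $m\ge4$; indeed $m\le4$ is immediate from $k_2\le2$, and for $m\ge5$, if $E'$ is the length-$(m-1)$ excursion produced by the deletion, the above gives $h(a_1,\dots,a_m)\le h(E')\le-3$ by induction.

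Now run the induction for (A). The base cases are $m=3$ (vacuous) and $m=4$ (where $h(a_1,a_2,a_3)=k_2-3\le-1$). For $m\ge5$, delete a largest interior term $a_r$. Prefixes of $(a_1,\dots,a_m)$ ending before $a_r$, or at $a_{r+2}$ or later, have $h$ unchanged and are settled by the inductive hypothesis applied to $E'$. The prefix ending at $a_{r+1}$ satisfies $h(a_1,\dots,a_{r+1})=h(a_1,\dots,a_{r-1},a_{r+1})-1\le-2$, since $(a_1,\dots,a_{r-1},a_{r+1})$ is a prefix of $E'$. If $r=m-1$, the remaining prefix $p=m-1$ satisfies $h(a_1,\dots,a_{m-1})=h(a_1,\dots,a_m)+2\le-1$ by Lemma~B; and if $r=2$, every prefix of $(a_1,\dots,a_m)$ of length $\ge3$ picks up an extra $-1$ under the deletion, which only helps.

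There remains exactly one prefix---and this is the main obstacle---namely the one ending at the global peak $a_r$ with $r\in[3,m-2]$: here $h(a_1,\dots,a_r)$ equals the $h$ of the corresponding prefix of $E'$ plus $1$, so the inductive bound $h\le-1$ yields only $h(a_1,\dots,a_r)\le0$. Writing $g_i=\sum_{j=2}^i(k_j-3)=h(a_1,\dots,a_{i+1})$, the needed inequality is $g_{r-2}+k_{r-1}\le2$, which holds automatically when $k_{r-1}\le2$ (and $k_{r-1}=k_2\le2$ disposes of $r=3$); the difficulty is a large $k_{r-1}$. To close this I would reinforce the inductive statement with a sharp lower‑tail estimate on the partial sums $(g_i)$ and use the structural fact, read off from the recursions $U_{j+1}=k_jU_j-U_{j-1}$, $V_{j+1}=k_jV_j-V_{j-1}$ of Remark~\ref{64} together with $a_j\le1$, that a large value of $k_{r-1}$ forces $a_{r-1}$ to be a deep local minimum of $(a_j)$, preceded by a long run of ratios equal to $1$ or $2$---so that $g_{r-2}$ is already negative by roughly $k_{r-1}$. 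Pinning down the precise strengthened hypothesis so that no $+1$ correction ever lifts a partial sum up to $0$ is the delicate step; everything else is routine bookkeeping.
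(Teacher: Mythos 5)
Your reduction of the theorem to the statement ``$h(a_1,\dots,a_p)<0$ for every prefix of an excursion'' and your use of reversal symmetry to get the second chain of inequalities are both sound and agree with the paper's setup. The problem is the induction itself, and you flag it yourself: after deleting a largest interior term $a_r$, the one prefix of the original excursion ending at $a_r$ (with $r\in[3,m-2]$) has its $h$-value equal to that of the corresponding prefix of the pruned sequence \emph{plus} $1$, so the inductive bound $\le-1$ only yields $\le 0$, while strict negativity is what the theorem needs. The claimed repair --- a strengthened hypothesis to the effect that a large $k_{r-1}$ forces $g_{r-2}$ to be negative ``by roughly $k_{r-1}$'' --- is never formulated precisely, let alone proved, and it is exactly the hard point: nothing in Remark \ref{64} as stated gives you that a large itinerary value at the peak is preceded by a compensating deficit in the partial sums. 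A secondary soft spot: your induction passes to the pruned sequence $E'$, which is no longer an orbit segment of the BCZ map, yet some inputs (e.g. $a_2=\left[\frac{1-a_m}{a_1}\right]a_1+a_m>1-a_1$, hence $k_2\le2$, and your Lemma B) are derived from Remark \ref{64} for genuine excursions and are not automatically inherited by $E'$; the induction class would have to be enlarged and these facts re-established for it. So, as it stands, the argument has a genuine gap.

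The paper's proof avoids this obstacle by never tracking how prefix sums change under deletions performed on the whole excursion. For each fixed $n$ it applies Lemma \ref{97743} directly to the sequence $(a_0,a_1,\dots,a_{n+1})$ whose $h$-value is $f_n-f_1+(k_1-3)$: a deleted term is then a maximum of the very sequence being evaluated, so its $h$-value is exactly preserved (no $\pm1$ corrections ever arise). Iterating until no interior term is maximal leaves $a_0,a_1,a_{m_1},\dots,a_{m_{k-1}},a_n,a_{n+1}$ with an increasing interior chain $a_1<a_{m_1}<\dots<a_{m_k}=a_n$; writing $s_j=a_{m_j}/a_{m_{j-1}}>1$ and $s_0=1/a_n$, one gets $f_n-f_1=s_1-3+\sum_{j=2}^{k}(\lceil s_j\rceil-3)+\lceil a_{n+1}/a_n\rceil-a_2/a_1$, and the elementary inequality $\prod_{j=0}^{k}s_j\ge\sum_{j=0}^{k}s_j-(2k-1)$ for $s_j\ge1$, combined with $a_2>1-a_1$ and $a_{n+1}\le1$, gives $f_n<f_1$ for all $n$ at once, with no induction on $m$. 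If you want to rescue your scheme, the most direct fix is precisely this: prune maxima of the prefix itself rather than of the ambient excursion, which eliminates the problematic $+1$ at the peak.
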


\begin{proof}

For any $n\in[2,m-1]$,
$$f_n-f_1=h(a_0,a_1,\cdots,a_{n+1})-(k_1-3).$$
Then, following a similar approach as in \S3.2.1, if we eliminate a largest term from the sequence $\{a_0,a_1,a_2,\cdots,a_{n+1}\}$, by Lemma \ref{97743}, we know that the value of $h$ remains unchanged. We continue this process until we cannot continue further. Let the final reduced sequence be $$\{a_0,a_1,a_{m_1},a_{m_2},\cdots,a_{m_{k-1}},a_n,a_{n+1}\}$$
where $k\geq1$, we define $m_0=1$, $m_k=n$.

We have
$$h(a_0,a_1,\cdots,a_{n+1})=h(a_0,a_1,a_{m_1},a_{m_2},\cdots,a_{m_{k-1}},a_n,a_{n+1}).$$
Since for any $j\in(1,m)$, $a_1<a_j$, so we know $a_1<a_{m_1}$, which implies that $a_{m_1}\leq a_{m_2}$, otherwise, the process of elimination wouldn't have stopped yet. Considering that throughout this process, every term divides the sum of two neighboring terms, we have $a_{m_1}<a_{m_2}$, otherwise, $\frac{a_1+a_{m_2}}{a_{m_1}}\in(1,2)$. Then we can deduce that $a_{m_2}<a_{m_3}$ and so on, which means that
$$a_{m_0}=a_1<a_{m_1}<a_{m_2}<\cdots<a_{m_k}=a_n.$$
Let $s_j=\frac{a_{m_j}}{a_{m_{j-1}}}>1$ for $j\in[1,k]$. Then, since $a_{m_{j-1}}<a_{m_j}<a_{m_{j+1}}$ for $j\in[1,k-1]$, we have $\frac{a_{m_{j-1}}+a_{m_{j+1}}}{a_{m_j}}=\left\lceil \frac{a_{m_{j+1}}}{a_{m_j}}\right\rceil=\lceil s_{j+1}\rceil$. Therefore,
\begin{align*}
f_n-f_1=&h(a_0,a_1,\cdots,a_{n+1})-(k_1-3)\\
=&h(a_0,a_1,a_{m_1},a_{m_2},\cdots,a_{m_{k-1}},a_n,a_{n+1})-(k_1-3)\\
=&\left(\frac{a_0+a_{m_1}}{a_1}-3\right)+\sum_{j=1}^{k-1}\left(\frac{a_{m_{j-1}}+a_{m_{j+1}}}{a_{m_j}}-3\right)\\
&+\left(\frac{a_{m_{k-1}}+a_{n+1}}{a_n}-3\right)-\left(\frac{a_0+a_2}{a_1}-3\right)\\
=&s_1-3+\sum_{j=2}^k(\lceil s_j\rceil-3)+\left\lceil\frac{a_{n+1}}{a_n}\right\rceil-\frac{a_2}{a_1}.
\end{align*}

Firstly, we will prove $f_n<f_1$ for $n\in[2,m-1]$.
\begin{align*}
f_n-f_1=&s_1-3+\sum_{j=2}^k(\lceil s_j\rceil-3)+\left\lceil\frac{a_{n+1}}{a_n}\right\rceil-\frac{a_2}{a_1}\\
<&s_1-3+\sum_{j=2}^k(s_j+1-3)+\left\lceil\frac{1}{a_n}\right\rceil-\frac{1-a_1}{a_1}\\
<&s_1-3+\sum_{j=2}^k(s_j-2)+\left(\frac{1}{a_n}+1\right)-\frac{1}{a_1}+1\\
=&-\frac{\prod\limits_{j=1}^ks_j}{a_n}+\frac{1}{a_n}+\sum_{j=1}^ks_j-(2k-1).
\end{align*}
Let $s_0=\frac{1}{a_n}\geq1$, then
$$f_n-f_1<-\prod\limits_{j=0}^ks_j+\sum_{j=0}^ks_j-(2k-1).$$

When $k=1$,
$$f_n-f_1<-s_0s_1+s_0+s_1-1=-(s_0-1)(s_1-1)\leq0;$$
when $k>1$,
\begin{align*}
f_n-f_1<&-\prod\limits_{j=0}^ks_j+\sum_{j=0}^ks_j-(2k-1)\\
=&-\prod\limits_{j=0}^{k-1}s_j+\sum_{j=0}^{k-1}s_j-(2k-3)-(s_k-1)\prod\limits_{j=0}^{k-1}s_j+s_k-2\\
<&-\prod\limits_{j=0}^{k-1}s_j+\sum_{j=0}^{k-1}s_j-(2k-3)-(s_k-1)+s_k-2\\
<&-\prod\limits_{j=0}^{k-1}s_j+\sum_{j=0}^{k-1}s_j-(2k-3)\\
\vdots&\\
<&-s_0s_1+s_0+s_1-1\\
\leq&0.
\end{align*}

Since we have prove for any $n\in[2,m-2]$,
$$f_n-f_1=h(a_1,\cdots,a_{n+1})<0,$$
and the only condition required is that $a_i>a_1$ for $i\in[2,n+1]$. Therefore by symmetry, as $a_i>a_m$ for $i\in[2,m-1]$, we know for any $n\in[2,m-2]$,
$$f_{m-1}-f_n=h(a_n,a_{n+1},\cdots,a_m)=h(a_m,a_{m-1},\cdots,a_n)<0.$$\\

\end{proof}

\subsection{$n\in(n_{i-1},n_i),i\in[c_k'+1,c_{k+1}'-1]$}

For $j\in(n_{i-1},n_i)$ and $i\in[c_k'+1,c_{k+1}'-1]$, since it is an excursion from $(a_{n_{i-1}},b_{n_{i-1}})$ to $(a_{n_i},b_{n_i})$, by Theorem \ref{773}, we have
$$f_j\in[f_{n_i-1},f_{n_{i-1}}).$$

Using (\ref{666}), we know that as $i,k\rightarrow+\infty$, $f_{n_i}>-i'$. Additionally, $k_{n_i}=\frac{a_{n_i-1}+a_{n_i+1}}{a_{n_i}}\leq\frac{2}{a_{n_i}}$. Consequently,
$$f_{n_i-1}=f_{n_i}-k_{n_i}>-i'-\frac{2}{a_{n_i}}\geq-2\max\left\{\frac{1}{a_{n_i}},i'\right\}.$$
Since $a_{n_i}=a_{2k}'-i'a_{2k+1}'>a_{2k+1}'$ and $a_{n_{i-1}}=a_{2k}'-(i'-1)a_{2k+1}'<2(a_{2k}'-i'a_{2k+1}')=2a_{n_i}$, we have
$$\max\left\{\frac{1}{a_{n_i}},i'\right\}<2\max\left\{\frac{1}{a_{n_{i-1}}},i'-1\right\},$$
$$f_j\geq f_{n_i-1}>-2\max\left\{\frac{1}{a_{n_i}},i'\right\}>-4\max\left\{\frac{1}{a_{n_{i-1}}},i'-1\right\}.$$
Therefore,
$$\log_j|f_j|<\log_{n_{i-1}}\max\left\{|f_{n_{i-1}}|,\frac{4}{a_{n_{i-1}}},4(i'-1)\right\}.$$
Since $|f_{n_i}|<\max\left\{\frac{r_4}{a_{n_i}},i'\right\}$ for $i\neq c_k'$, and $\left|f_{n_{c_k'}}\right|<\frac{r_2}{a_{n_{c_k'}}}$,  we obtain
$$\limsup\limits_{\substack{j\rightarrow+\infty,j\in(n_{i-1},n_i)\\ i\in[c_k'+1,c_{k+1}'-1]}}\log_j|f_j|\leq
\limsup\limits_{\substack{i\rightarrow+\infty\\ i\in[c_k'+1,c_{k+1}'-1]}}\log_{n_{i-1}}\max\left\{|f_{n_{i-1}}|,\frac{4}{a_{n_{i-1}}},4(i'-1)\right\}$$
\begin{align}
\leq&\max\left\{\limsup_{k\rightarrow+\infty}\log_{n_{c_k'}}\frac{1}{a_{n_{c_k'}}},\max\left\{\limsup\limits_{i\rightarrow+\infty, i\neq c_k'}\log_{n_i}\frac{1}{a_{n_i}},\limsup\limits_{i\rightarrow+\infty, i\neq c_k'}\log_{n_i}i'\right\}\right\}\nonumber\\
=&\max\left\{\frac{e^--1}{e^-},\frac{e^+-2}{e^+}\right\}.    \label{6610}
\end{align}\\

\subsection{$n\in(n_{c_k'-1},n_{c_k'})$}

For $j\in(n_{c_k'-1},n_{c_k'})$, according to Remark \ref{64}, we have
$$\left\{a_m|m\in\left[n_{c_k'-1}+1,n_{c_k'}-1\right]\right\}$$$$=\left\{sa_{n_{c_k'-1}}+ta_{n_{c_k'}}|s,t\in\mathbb{Z}^+, (s,t)=1, sa_{n_{c_k'-1}}+ta_{n_{c_k'}}\leq1\right\}.$$
Let $a_j=s_ja_{n_{c_k'-1}}+t_ja_{n_{c_k'}}$ for $j\in\left[n_{c_k'-1}+1,n_{c_k'}-1\right]$, where we have $\left(s_{n_{c_k'-1}},t_{n_{c_k'-1}}\right)=(1,0)$ and $\left(s_{n_{c_k'}},t_{n_{c_k'}}\right)=(0,1)$.
Since $a_{n_{c_k'-1}}+c_{2k+1}a_{n_{c_k'}}=a_{2k}'+a_{2k-1}'+c_{2k+1}a_{2k}'\leq 3a_{2k-1}'<1$, we define $m_i$ as the number such that $(s_{m_i},t_{m_i})=(1,i)$ for $i\in[0,c_{2k+1}]$. Notably, $m_0=n_{c_k'-1}$.

For all $j\in\left(m_{c_{2k+1}},n_{c_k'}\right)$, we have $\frac{t_j}{s_j}>\frac{c_{2k+1}}{1}$, which implies $t_j>c_{2k+1}$. Therefore, $$a_j=s_ja_{n_{c_k'-1}}+t_ja_{n_{c_k'}}>a_{n_{c_k'-1}}+c_{2k+1}a_{n_{c_k'}}=a_{m_{c_{2k+1}}}.$$
Moreover, $a_j>a_{n_{c_k'}}$. These indicate that it is an excursion from $(a_{m_{c_{2k+1}}},b_{m_{c_{2k+1}}})$ to $(a_{n_{c_k'}},b_{n_{c_k'}})$. By Theorem \ref{773}, we obtain
\begin{equation}f_{n_{c_k'}-1}\leq f_j< f_{m_{c_{2k+1}}},\label{37373}\end{equation}
$\forall j\in(m_i,m_{i+1})$ for $i\in[0,c_{2k+1}-1]$. We also know that $\frac{t_j}{s_j}\in\left(\frac{i}{1},\frac{i+1}{1}\right)$ so $t_j\geq i+1$, $s_j>1$, which means that  $$a_j=s_ja_{n_{c_k'-1}}+t_ja_{n_{c_k'}}>a_{n_{c_k'-1}}+(i+1)a_{n_{c_k'}}=a_{m_{i+1}}>a_{m_i}.$$
Thus, it is an excursion from $(a_{m_i},b_{m_i})$ to $(a_{m_{i+1}},b_{m_{i+1}})$, by Theorem \ref{773}, we have
\begin{equation}f_{m_{i+1}-1}\leq f_j< f_{m_i}.\label{73737}\end{equation}\\

\subsubsection{$n=m_i$}

Now, we aim to show that $\limsup\limits_{j\rightarrow+\infty\atop j=m_i\in\left(n_{c_k'-1},n_{c_k'}\right)}\log_{m_i}|f_{m_i}|\leq\max\left\{\frac{e^--1}{e^-},\frac{e^+-2}{e^+}\right\}$. Firstly, we provide an estimation of $f_{m_i}=h(a_0,a_1,\cdots,a_{m_i+1})$ for $i\in[1,c_{2k+1}]$.
\begin{align*}
f_{m_i}&=h(a_{-1},a_0,a_1,\cdots,a_{m_i+1})-(k_0-3)\\
&=h\left(a_{-1},a_0,a_{n_0},a_{n_1},a_{n_2},\cdots,a_{n_{c_k'-1}},a_{m_1},a_{m_2},\cdots,a_{m_i},a_{m_i+1}\right)-(k_0-3)\\
&=\sum_{j=0}^{k-1}(c_{2j+1}-c_{2j})-c_{2k}-(i-1)+\left(\frac{a_{m_{i-1}}+a_{m_i+1}}{a_{m_i}}-3\right).
\end{align*}
We have
$$\frac{a_{m_{i-1}}}{a_{m_i}}<1,$$
$$\frac{1}{a_{m_i}}-1=\frac{1-a_{m_i}}{a_{m_i}}<\frac{a_{m_i+1}}{a_{m_i}}\leq\frac{1}{a_{m_i}},$$
which implies
$$\sum_{j=0}^{k-1}(c_{2j+1}-c_{2j})-c_{2k}-i+\frac{1}{a_{m_i}}-3<f_{m_i}<\sum_{j=0}^{k-1}(c_{2j+1}-c_{2j})-c_{2k}-i+\frac{1}{a_{m_i}}-1.$$
Since $a_{m_i}=a_{2k}'+a_{2k-1}'+ia_{2k}'\leq 3a_{2k-1}'$, by Remark \ref{771}, we have
$$\left|\frac{\sum\limits_{j=0}^k(c_{2j}-c_{2j-1})}{1\big/ a_{m_i}}\right|\leq\left|3\cdot\frac{\sum\limits_{j=0}^k(c_{2j}-c_{2j-1})}{1\big/ a_{2k-1}'}\right|\rightarrow0.$$
Therefore, for constant $r_5\in(0,1)$ and $r_6\in(1,+\infty)$, as $k\rightarrow+\infty$, we obtain
\begin{equation}
\frac{r_5}{a_{m_i}}-i<f_{m_i}<\frac{r_6}{a_{m_i}}-i.       \label{667}
\end{equation}
Let $r_5=0.9$ and $r_6=1.1$. Then we have
$$|f_{m_i}|<\max\left\{\frac{r_6}{a_{n_i}},i\right\},$$
therefore,
\begin{align*}
\limsup\limits_{j\rightarrow+\infty\atop j=m_i\in\left(n_{c_k'-1},n_{c_k'}\right)}\log_{m_i}|f_{m_i}|&\leq\limsup\limits_{j\rightarrow+\infty\atop j=m_i\in\left(n_{c_k'-1},n_{c_k'}\right)}\log_{m_i}\max\left\{\frac{r_6}{a_{m_i}},i\right\}\\
&=\max\left\{\limsup\limits_{j\rightarrow+\infty\atop j=m_i\in\left(n_{c_k'-1},n_{c_k'}\right)}\log_{m_i}\frac{r_6}{a_{m_i}},\limsup\limits_{j\rightarrow+\infty\atop j=m_i\in\left(n_{c_k'-1},n_{c_k'}\right)}\log_{m_i}i\right\}\\
&=\max\left\{\limsup\limits_{j\rightarrow+\infty\atop j=m_i\in\left(n_{c_k'-1},n_{c_k'}\right)}\log_{m_i}\frac{1}{a_{m_i}},\limsup\limits_{j\rightarrow+\infty\atop j=m_i\in\left(n_{c_k'-1},n_{c_k'}\right)}\log_{m_i}i\right\}.
\end{align*}
Since $m_i>n_{c_k'-1}$, we have $\frac{1}{a_{m_i}}<\frac{1}{a_{n_{c_k'-1}}}$. Therefore,
$$\limsup\limits_{j\rightarrow+\infty\atop j=m_i\in\left(n_{c_k'-1},n_{c_k'}\right)}\log_{m_i}\frac{1}{a_{m_i}}\leq\limsup\limits_{i\rightarrow+\infty, i= c_k'-1}\log_{n_i}\frac{1}{a_{n_i}}\leq\limsup\limits_{i\rightarrow+\infty, i\neq c_k'}\log_{n_i}\frac{1}{a_{n_i}}=\frac{1}{2}.$$\\
As $m_i\in(n_{c_k'-1},n_{c_k'})$ and it is an excursion from $(a_{m_j},b_{m_j})$ to $(a_{m_{j+1}},b_{m_{j+1}})$, we can express $m_i$ as follows:
\begin{align*}
m_i&=m_i-n_{c_k'-1}+n_{c_k'-1}\\
&=\sum_{j=1}^i(m_j-m_{j-1})+n_{c_k'-1}\\
&\sim\frac{3}{\pi^2}\sum_{j=1}^{i}\frac{1}{a_{m_j}a_{m_{j-1}}}+n_{c_k'-1}\\
&=\frac{3}{\pi^2}\sum_{j=1}^{i}\frac{1}{(a_{2k-1}'+ja_{2k}')(a_{2k-1}'+(j+1)a_{2k}')}+n_{c_k'-1}\\
&=\frac{3}{\pi^2}\sum_{j=1}^{i}\frac{1}{a_{2k}'}\left(\frac{1}{a_{2k-1}'+ja_{2k}'}-\frac{1}{a_{2k-1}'+(j+1)a_{2k}'}\right)+n_{c_k'-1}\\
&=\frac{3}{\pi^2}\frac{1}{a_{2k}'}\left(\frac{1}{a_{2k-1}'+a_{2k}'}-\frac{1}{a_{2k-1}'+(i+1)a_{2k}'}\right)+n_{c_k'-1}\\
&=\frac{3}{\pi^2}\frac{i}{(a_{2k-1}'+a_{2k}')(a_{2k-1}'+(i+1)a_{2k}')}+n_{c_k'-1}.
\end{align*}\\
Given that $a_{2k-1}'+(i+1)a_{2k}'<3a_{2k-1}'$, we have
$$\frac{i}{(a_{2k-1}'+a_{2k}')(a_{2k-1}'+(i+1)a_{2k}')}\in\left(\frac{i}{6a_{2k-1}'^2},\frac{i}{a_{2k-1}'^2}\right).$$
By Lemma \ref{9932}, let $s=c_{2k}-1$, we obtain
\begin{equation}
n_{c_k'-1}=O\left((\frac{1}{a_{2k-1}'})^2\right). \label{9939}
\end{equation}
Furthermore,
\begin{equation}
m_i\asymp\frac{i}{a_{2k-1}'^2}.  \label{669}
\end{equation}
As a result,
$$\limsup\limits_{j\rightarrow+\infty\atop j=m_i\in\left(n_{c_k'-1},n_{c_k'}\right)}\log_{m_i}i=\limsup\limits_{j\rightarrow+\infty\atop j=m_i\in\left(n_{c_k'-1},n_{c_k'}\right)}\log_{\frac{i}{a_{2k-1}'^2}}i.$$
Since $\frac{1}{a_{2k-1}'^2}>1$,  $\log_{\frac{x}{a_{2k-1}'^2}}x$ is a monotonically increasing function for $x\geq1$. Therefore,
$$\log_{\frac{i}{a_{2k-1}'^2}}i\leq\log_{\frac{c_{2k+1}}{a_{2k-1}'^2}}c_{2k+1}.$$
Hence,
\begin{align*}
\limsup\limits_{j\rightarrow+\infty\atop j=m_i\in\left(n_{c_k'-1},n_{c_k'}\right)}\log_{m_i}i&\leq\limsup\limits_{k\rightarrow+\infty}\log_{\frac{c_{2k+1}}{a_{2k-1}'^2}}c_{2k+1}\\
&=\limsup\limits_{k\rightarrow+\infty}\log_{c_{2k+1}q_{2k}'^2}c_{2k+1}\\
&=\limsup\limits_{k\rightarrow+\infty}\log_{{q_{2k}'}^{e_{2k+1}-2}q_{2k}'^2}{q_{2k}'}^{e_{2k+1}-2}\\
&=\frac{e^--2}{e^-}.
\end{align*}

Therefore,
\begin{align}
\limsup\limits_{j\rightarrow+\infty\atop j=m_i\in\left(n_{c_k'-1},n_{c_k'}\right)}\log_{m_i}|f_{m_i}|&\leq\max\left\{\limsup\limits_{j\rightarrow+\infty\atop j=m_i\in\left(n_{c_k'-1},n_{c_k'}\right)}\log_{m_i}\frac{1}{a_{m_i}},\limsup\limits_{j\rightarrow+\infty\atop j=m_i\in\left(n_{c_k'-1},n_{c_k'}\right)}\log_{m_i}i\right\}\nonumber\\ \nonumber
&\leq\max\left\{\frac{1}{2},\frac{e^--2}{e^-}\right\}\\
&\leq\max\left\{\frac{e^--1}{e^-},\frac{e^+-2}{e^+}\right\}.     \label{668}
\end{align}\\

\subsubsection{$n\in(m_{i-1},m_i),i\in[1,c_{2k+1}]$}

For $j\in(m_{i-1},m_i)$ with $i\in[1,c_{2k+1}]$, by (\ref{73737}), we have
$$f_{m_{i}-1}\leq f_j< f_{m_{i-1}}.$$
By (\ref{667}), we know that as $i,k\rightarrow+\infty$, $f_{m_i}>-i$. Moreover, $k_{m_i}=\frac{a_{m_i-1}+a_{m_i+1}}{a_{m_i}}\leq\frac{2}{a_{m_i}}$. So we have
$$f_{m_i-1}=f_{m_i}-k_{m_i}>-i-\frac{2}{a_{m_i}}\geq-2\max\left\{\frac{1}{a_{m_i}},i\right\}.$$
Since $a_{m_i}>a_{m_{i-1}}$, we have
$$\max\left\{\frac{1}{a_{m_i}},i\right\}<2\max\left\{\frac{1}{a_{m_{i-1}}},i-1\right\},$$
$$f_j\geq f_{m_i-1}>-2\max\left\{\frac{1}{a_{m_i}},i\right\}>-4\max\left\{\frac{1}{a_{m_{i-1}}},i-1\right\},$$
then
$$\log_j|f_j|<\log_{m_{i-1}}\max\left\{|f_{m_{i-1}}|,\frac{4}{a_{m_{i-1}}},4(i-1)\right\}.$$
Since $|f_{m_i}|<\max\left\{\frac{r_6}{a_{n_i}},i\right\}$ for $i\in[1,c_{2k+1}]$, and $|f_{m_0}|=\left|f_{n_{c_k'-1}}\right|$,  we have
$$\limsup\limits_{\substack{j\rightarrow+\infty,j\in(m_{i-1},m_i)\\ m_i\in\left(n_{c_k'-1},n_{c_k'}\right)}}\log_j|f_j|\leq
\limsup\limits_{j\rightarrow+\infty\atop j=m_{i-1}\in\left[n_{c_k'-1},n_{c_k'}\right)}\log_{m_{i-1}}\max\left\{|f_{m_{i-1}}|,\frac{4}{a_{m_{i-1}}},4(i-1)\right\},$$
\begin{align*}
\leq&\max\left\{\limsup_{j\rightarrow+\infty\atop j=n_{c_k'-1}}\log_j|f_j|,\max\left\{\limsup\limits_{j\rightarrow+\infty\atop j=m_i\in\left(n_{c_k'-1},n_{c_k'}\right)}\log_{m_i}\frac{1}{a_{m_i}},\limsup\limits_{j\rightarrow+\infty\atop j=m_i\in\left(n_{c_k'-1},n_{c_k'}\right)}\log_{m_i}i\right\}\right\}\\
\leq&\max\left\{\limsup_{i\rightarrow+\infty\atop i\neq c_k'}\log_{n_i}|f_{n_i}|,\max\left\{\frac{1}{2},\frac{e^--2}{e^-}\right\}\right\}\\
\leq&\max\left\{\frac{e^--1}{e^-},\frac{e^+-2}{e^+}\right\}.
\end{align*}\\

\subsubsection{$n\in\left(m_{c_{2k+1}},n_{c_k'}\right)$}

For $j\in\left(m_{c_{2k+1}},n_{c_k'}\right)$, by (\ref{37373}), we have
$$f_{n_{c_k'}-1}\leq f_j< f_{m_{c_{2k+1}}}.$$
By (\ref{661}), we know that as $k\rightarrow+\infty$, $f_{n_{c_k'}}>\frac{r_1}{a_{n_{c_k'}}}$. Also, $k_{n_{c_k'}}=\frac{a_{n_{c_k'}-1}+a_{n_{c_k'}+1}}{a_{n_{c_k'}}}\leq\frac{2}{a_{n_{c_k'}}}$. So we have
$$f_j\geq f_{n_{c_k'}-1}=f_{n_{c_k'}}-k_{n_{c_k'}}>\frac{r_1}{a_{n_{c_k'}}}-\frac{2}{a_{n_{c_k'}}}>-\frac{1}{a_{n_{c_k'}}}>-\frac{f_{n_{c_k'}}}{r_1},$$
thus
\begin{align*}
\limsup_{j\rightarrow+\infty\atop j\in\left(m_{c_{2k+1}},n_{c_k'}\right)}\log_j|f_j|\leq\max\Bigg\{&\limsup_{j\rightarrow+\infty\atop j=m_{c_{2k+1}}\in\left(n_{c_k'-1},n_{c_k'}\right)}\log_{m_{c_{2k+1}}}|f_{m_{c_{2k+1}}}|,\\
&\limsup_{j\rightarrow+\infty\atop j=m_{c_{2k+1}}\in\left(n_{c_k'-1},n_{c_k'}\right)}\log_{m_{c_{2k+1}}}|\frac{f_{n_{c_k'}}}{r_1}|\Bigg\}.
\end{align*}
By (\ref{668}), we have
\begin{align*}
\limsup_{j\rightarrow+\infty\atop j=m_{c_{2k+1}}\in\left(n_{c_k'-1},n_{c_k'}\right)}\log_{m_{c_{2k+1}}}|f_{m_{c_{2k+1}}}|&\leq\limsup\limits_{j\rightarrow+\infty\atop j=m_i\in\left(n_{c_k'-1},n_{c_k'}\right)}\log_{m_i}|f_{m_i}|\\
&\leq\max\left\{\frac{e^--1}{e^-},\frac{e^+-2}{e^+}\right\}.
\end{align*}
Since $c_{2k+1}a_{2k}'<a_{2k-1}'=a_{2k+1}'+c_{2k+1}a_{2k}'<2c_{2k+1}a_{2k}'$, then by (\ref{669}), we have
$$m_{c_{2k+1}}\asymp\frac{c_{2k+1}}{a_{2k-1}'^2}\asymp\frac{1}{a_{2k-1}'a_{2k}'}\asymp n_{c_k'},$$
therefore
\begin{align*}
\limsup_{j\rightarrow+\infty\atop j=m_{c_{2k+1}}\in\left(n_{c_k'-1},n_{c_k'}\right)}\log_{m_{c_{2k+1}}}\left|\frac{f_{n_{c_k'}}}{r_1}\right|&=\limsup_{j\rightarrow+\infty\atop j=m_{c_{2k+1}}\in\left(n_{c_k'-1},n_{c_k'}\right)}\log_{n_{c_k'}}\left|\frac{f_{n_{c_k'}}}{r_1}\right|\\
&=\limsup_{k\rightarrow+\infty}\log_{n_{c_k'}}|f_{n_{c_k'}}|\\
&\leq\max\left\{\frac{e^--1}{e^-},\frac{e^+-2}{e^+}\right\},
\end{align*}
so
$$\limsup_{j\rightarrow+\infty\atop j\in\left(m_{c_{2k+1}},n_{c_k'}\right)}\log_j|f_j|\leq\max\left\{\frac{e^--1}{e^-},\frac{e^+-2}{e^+}\right\}.$$

In conclusion,
\begin{align}
\limsup_{j\rightarrow+\infty\atop j\in\left(n_{c_k'-1},n_{c_k'}\right)}\log_j|f_j|=&\max\bigg\{\limsup\limits_{j\rightarrow+\infty\atop j=m_i\in\left(n_{c_k'-1},n_{c_k'}\right)}\log_{m_i}|f_{m_i}|,\nonumber\\  \nonumber
&\limsup\limits_{\substack{j\rightarrow+\infty,j\in(m_{i-1},m_i)\\  m_i\in(n_{c_k'-1},n_{c_k'})}}\log_j|f_j|,\limsup_{j\rightarrow+\infty\atop j\in\left(m_{c_{2k+1}},n_{c_k'}\right)}\log_j|f_j|\bigg\}\\
\leq&\max\left\{\frac{e^--1}{e^-},\frac{e^+-2}{e^+}\right\}.    \label{6611}
\end{align}\\

\subsection{Summary}

Thus, by (\ref{6610})(\ref{6611}), we have
\begin{align}
\limsup\limits_{j\rightarrow+\infty,j\neq n_i}\log_j|f_j|&\leq\max\left\{\limsup\limits_{\substack{j\rightarrow+\infty,j\in(n_{i-1},n_i)\\  \nonumber i\in[c_k'+1,c_{k+1}'-1]}}\log_j|f_j|,\limsup_{j\rightarrow+\infty\atop j\in\left(n_{c_k'-1},n_{c_k'}\right)}\log_j|f_j|\right\}\\
&\leq\max\left\{\frac{e^--1}{e^-},\frac{e^+-2}{e^+}\right\},    \label{6613}
\end{align}
then by (\ref{6612})(\ref{6613})
$$\limsup_{n\rightarrow+\infty}\log_n|f_n|=\limsup\limits_{i\rightarrow+\infty}\log_{n_i}|f_{n_i}|=\max\left\{\frac{e^--1}{e^-},\frac{e^+-2}{e^+}\right\},$$
which means that we have already proved the Theorem \ref{8817}.

By symmetry, we can get Theorem \ref{8818}.\\

\subsection{Proof of Corollary \ref{9898}}
\begin{proof}
We only need to demonstrate that Theorem \ref{8817} remains valid if we replace $k_i$ by $\hat{k}\circ T^{i-1}-3$. Let $\hat{f}_n:=\displaystyle\sum_{i=1}^n\hat{k}\left(T^{i-1}(a,b)-3\right)=\displaystyle\sum_{i=1}^n\left(\frac{k_{i-1}+k_i}{2}-3\right).$

Since
$$\hat{f}_n=f_{n-1}+\frac{k_n}{2}+\frac{k_0}{2}-3\geq f_{n-1}+\frac{1}{2}+\frac{k_0}{2}-3,$$
we have
\begin{equation}
\limsup_{n\rightarrow+\infty}\frac{\log \hat{f}_n}{\log n}\geq\limsup_{n\rightarrow+\infty}\frac{\log f_n}{\log n}, \label{8819}
\end{equation}
\begin{equation}
\liminf_{n\rightarrow+\infty}\frac{\log \hat{f}_n}{\log n}\geq\liminf_{n\rightarrow+\infty}\frac{\log f_n}{\log n}. \label{8822}
\end{equation}
On the other hand, since
$$f_n=\hat{f}_n+\frac{k_n}{2}-\frac{k_0}{2}\geq \hat{f}_n+\frac{1}{2}-\frac{k_0}{2},$$
we also have
\begin{equation}
\limsup_{n\rightarrow+\infty}\frac{\log f_n}{\log n}\geq\limsup_{n\rightarrow+\infty}\frac{\log \hat{f}_n}{\log n},\label{8820}
\end{equation}
\begin{equation}
\liminf_{n\rightarrow+\infty}\frac{\log f_n}{\log n}\geq\liminf_{n\rightarrow+\infty}\frac{\log \hat{f}_n}{\log n}.  \label{8821}
\end{equation}

By combining (\ref{8819}) and (\ref{8820}), we conclude that
$$\limsup_{n\rightarrow+\infty}\frac{\log \hat{f}_n}{\log n}=\limsup_{n\rightarrow+\infty}\frac{\log f_n}{\log n}.$$
Similarly, by combining (\ref{8822}) and (\ref{8821}), we find that
$$\liminf_{n\rightarrow+\infty}\frac{\log \hat{f}_n}{\log n}=\liminf_{n\rightarrow+\infty}\frac{\log f_n}{\log n}.$$
Therefore, we have
$$\limsup_{n\rightarrow+\infty}\frac{\log |\hat{f}_n|}{\log n}=\limsup_{n\rightarrow+\infty}\frac{\log |f_n|}{\log n}=\max\left\{\frac{e^--1}{e^-},\frac{e^+-2}{e^+}\right\}.$$

\end{proof}


\begin{thebibliography}{99}
\bibitem{At1} J.~S.~Athreya and G.~A.~Margulis, \textit{Logarithm laws for unipotent flows, I.} Journal of
Modern Dynamics. 3 (2009), no. 3, 359-378.
\bibitem{Su} D.~Sullivan, \textit{Disjoint spheres, approximation by imaginary quadratic numbers and the logarithm
law for geodesics.} Acta Mathematica. 149 (1982), 215-237.
\bibitem{Ma} H.~Masur, \textit{Logarithmic law for geodesics in moduli space}. Contemp. Math.
150 (1993), 229-245.
\bibitem{Kl} D.~Y.~Kleinbock and G.~A.~Margulis, \textit{Logarithm laws for flows on homogeneous spaces.}
Invent. Math. 138 (1999), no. 3, 451-494.
\bibitem{At2} J.~S.~Athreya and G.~A.~Margulis, \textit{Logarithm laws for unipotent flows, II.} Journal of
Modern Dynamics. 11.03(2016):1-16.
\bibitem{Bo} F.~Boca, C.~Cobeli, and A.~Zaharescu, \textit{A conjecture of R. R. Hall on Farey points.} J. Reine Angew. Math.
(2001), 207 - 236.
\bibitem{Ha} R.~R.~Hall, \textit{On consecutive farey arc ii.}
Acta Arith. (1994), 66(1): 1 - 9.
\bibitem{At} J.~S.~Athreya, Y.~Cheung, \textit{A Poincar{\'e} section for the horocycle flow on the space of lattices.}
Int Math Res Notices. (2014), 2014(10): 2643 - 2690.
\bibitem{sh} R.~R.~Hall and P.~Shiu, \textit{The index of a Farey sequence.}
Michigan Math. J. (2003), no. 1, 209 - 223.
\bibitem{Ha1} R.~R.~Hall, \textit{The parity of Farey denominators and the Farey index.} J. Number Theory. 115 (2005),
no. 1, 71-86.
\bibitem{Go} F.~Boca, R.~Gologan, and A.~Zaharescu, \textit{On the index of Farey sequences.} Q. J. Math.
53 (2002), no. 4, 377 - 391.
\bibitem{Al1} E.~Alkan, A.~H.~Ledoan, and A.~Zaharescu, \textit{On Dirichlet L-functions and the index of visible
points.} Illinois J. Math. 51 (2007), no. 2, 455-477.
\bibitem{Al2} E.~Alkan, A.~H.~Ledoan, M.~V\^aj\^aitu, and A.~Zaharescu, \textit{On the index of fractions with square-free
denominators in arithmetic progressions.} Ramanujan J. 16 (2008), no. 2, 131-161.
\bibitem{Za} D.~Zagier, \textit{Eisenstein Series and the Riemann Zeta-Function.} Springer Berlin Heidelberg (1981).
\bibitem{Sa} P.~Sarnak, \textit{Asymptotic behavior of periodic orbits of the horocycle flow and Eisenstein
series.} Comm. Pure Appl. Math. 34 (1981), no. 6, 719 - 739.
\bibitem{Fr} J.~Franel, \textit{Les suites de farey et le probl{\`e}me des nombres premiers.} Nachrichten von
der Gesellschaft der Wissenschaften zu G\"ottingen, Mathematisch-Physikalische Klasse. (1924), 1924: 198-201.
\bibitem{La} E.~Landau, \textit{Bemerkungen zu der obenstehenden Abhandlung von J.Franel.} Nachrichten von
der Gesellschaft der Wissenschaften zu G\"ottingen, Mathematisch-Physikalische Klasse. (1924), 1924: 202-206.
\bibitem{No} A.~Nogueira, \textit{Orbit distribution on $R^2$ under the natural action of $SL (2, Z)$.}
Indagat Math. (2002), 13(1): 103-124.

\end{thebibliography}
\end{document}